\newcommand{\R}{\mathbb R}
\newcommand{\e}{\varepsilon}
\newcommand{\N}{\mathbb N}
\def\Lip{\operatorname{Lip}}
\newcommand{\U}{\mathscr{U}}
\newcommand{\PO}{\mathrm{PO}}
\theoremstyle{plain}
\newtheorem{theorem}{Theorem}
\newtheorem{teor}{Theorem}
\newtheorem{prop}{Proposition}
\newtheorem{lemma}{Lemma}
\newtheorem{cor}{Corollary}
\newtheorem{corollary}{Corollary}
\theoremstyle{remark}
\newcommand{\adef}{\begin{defn}}
\newcommand{\zdef}{\end{defn}}
\newtheorem{defn}[theorem]{Definition}
\def\sep{\operatorname{sep}}
\title{The separable Jung constant in Banach spaces}
\author{Jes\'us M. F. Castillo}
\address{Instituto de Matemßtica de la Universidad de Extremadura,
Avenida de Elvas, 06071-Badajoz, Spain}
\email{castillo@unex.es}
\author{Pier Luigi Papini}
\address{via Martucci 19, 40136 Bologna, Italia}
\email{pierluigi.papini@unibo.it}
\subjclass[2010]{46B04, 46B20, 46B26, 46M18 }
\thanks{The research of the first author was supported in part by Project IB16056 de la Junta de Extremadura and MINCIN Project MTM2016-76958-C2.}
\begin{document}

\thanks{The authors thank the referee for his/her splendid job and many poignant comments that helped enormously to improve the quality of the paper.}

\keywords{$1$-separable injectivity; Jung constant; Kottman constant}
\maketitle

\begin{abstract}  This paper contains a study of the separable version $J_s(\cdot)$ of the classical Jung constant. We first establish, following Davis \cite{davis}, that a Banach space $X$ is $1$-separably injective if and only if $J_s(X)=1$. This characterization is then used for the understanding of new $1$-separably injective spaces. The last section establishes the inequality $\frac{1}{2}K(Y)J_s(X)\leq e_1^s(Y,X)$ connecting the separable Jung constant, Kottman's constant and the separable-one-point extension constant for Lipschitz maps, which is then used to derive an improved version of Kalton's inequality $K(X,c_0)\leq e(X,c_0)$ and a new characterization of $1$-separable injectivity.\end{abstract}

\section{The Jung constants}

Given a bounded subset $A\subset X$ we define the diameter of $A$ as $\delta(A) = \sup \{ \|a-b\|: a, b\in A\}$, while the radius of $A$ is defined by $r(A) = \inf_{b\in X} \sup_{a\in A}  \|a-b\|$. The Jung constant \cite{jung} of $X$ is defined as
$$J(X) = \sup \frac{2r(A)}{\delta(A)} $$
where the supremum is taken over all closed bounded sets $A$ with $\delta(A)>0$. It was shown by Davis in \cite{davis} and by Franchetti in \cite{fran2} that
a Banach space with Jung constant 1 is $1$-injective. It follows then from the work of Lindenstrauss \cite{lindmemo} that a Banach space is $1$-injective if and only if $J(X)=1$. Recall that a  Banach space $X$ is $\lambda$-injective if for every Banach space $F$ and every subspace $E$ of $F$ every operator $t: E\to X$ has an extension $T: F\to X$ with $\|T\|\leq \lambda\|t\| $.\medskip

Two important variations of this notion are: $\lambda$-separable injectivity, when the property above holds when $F$ is separable; and universal $\lambda$-separable injectivity \cite{accgm1, accgmLN}, when the preceding property holds when $E$ is separable. Other cardinal variations of this notion were studied in \cite{accgm4} and \cite{accgmLN}. Accordingly, given an uncountable cardinal $\aleph$, a Banach space $X$ is said to be $(\lambda, \aleph)$-injective if the preceding condition holds for Banach spaces $F$ with density character less than $\aleph$. As it is remarked in \cite[Def. 5.1]{accgmLN}, ``the resulting name for separable injectivity turns out to be $\aleph_1$-injectivity, not $\aleph_0$-injectivity, as one would have expected. Nevertheless, we have followed the uses of set theory where properties labeled by a cardinal always indicate that
something happens for sets whose cardinality is strictly less than that cardinal".\medskip

Turning back to the connections between injectivity properties and the Jung constant, recall that a Banach space is $1$-injective if and only if
every family of mutually intersecting balls has nonempty intersection and observe that $1$-injective spaces are $1$-complemented in
any larger superspace; in particular, in larger one codimensional superspaces ($F$ is a one codimensional superspace of $E$ if $\dim F/E=1$). This is the core of the proof that $1$-injectivity can be characterized by properties of intersection of balls \emph{of equal radius}. Namely, a Banach space is $1$-injective if and only if every family of mutually intersecting balls of radius one admits nonempty intersection; i.e., $J(X)=1$. Such is the content of the proof of Franchetti \cite{fran2}: indeed, it is based on the estimate $1\leq J(X)\leq \lambda_1(X)\leq g(J(X))$ where $g:[1,2]\to [1,2]$ is a certain function such that $g(1)=1$.\medskip

What occurs under cardinal restrictions? For instance, are the properties \emph{every countable family of mutually intersecting balls has nonempty intersection} and \emph{every family of mutually intersecting balls of radius one has nonempty intersection} equivalent?
The former of those properties corresponds \cite{accgmLN} to $1$-separable injectivity. To treat the latter we introduce the \emph{separable} Jung constant $J_s(\cdot)$ defined as $J(\cdot)$ but considering only separable bounded sets (there is an implicit approach to this notion in \cite{bayana}).

\adef Given an uncountable cardinal $\aleph$ we define the Jung constant
$$J_\aleph(X) = \sup \frac{2r(A)}{\delta(A)} $$
where the supremum is taken over all closed bounded sets $A$ with $\delta(A)>0$ and density character strictly less than $\aleph$.
\zdef

Thus, $J_s(X) = J_{\aleph_1}(X)$; and the question is then whether $J_s(X)=1$ characterizes $1$-separable injectivity. The straight way to derive this result from the injectivity result cannot work because, suprising as it may seems, $1$-separably injective spaces are not necessarily $1$-complemented in one-codimensional superspaces (see the proof below) and this is what makes difficult to find a way to adapt Franchetti's proof for separable injectivity. Davis proof \cite{davis} again relies on the fact of proving that spaces $X$ with $J(X)=1$ are $1$ complemented in any superspace $F$ so that $\mathrm{dim} F/X=1$ and therefore it cannot work under cardinal restrictions. To make sound all these assertions, we need a couple of elements of homological Banach space theory. The first one is the notion of  exact sequence $0\to Y \to X \to Z\to 0$, which is a diagram formed by Banach spaces and linear continuous operators with the property that the kernel of each arrow coincides with the image of the preceding. Thanks to the open mapping theorem this exactly means that $Y$ is (isomorphic to) a subspace of $X$ and $Z$ is (isomorphic to) the corresponding quotient $X/Y$. An exact sequence is said to split if the subspace $Y$ is complemented in $X$; i.e., there is a linear continuous projection $P: X\to Y$. The second thing is the  push-out construction (see complete details in \cite{accgmLN}), which operatively defined means that whenever one has an exact sequence  $0\to Y \to X \to Z\to 0$ and an operator $\tau: Y\to Y'$ there is a commutative diagram

\begin{equation}\label{podiagram}\begin{CD}
0@>>> Y @>>> X @>>> Z @>>>0\\
&&@V{\tau}VV @VV{T}V @|\\
0@>>> Y' @>>> \PO @>>>Z@>>>0\end{CD}\end{equation}

The push-out space $\PO$ is the quotient $(Y'\oplus_1 X)/\Delta(Y)$ where $\Delta: Y\to Y'\oplus_1 X$ is the isomorphic embedding $\Delta(y)= (\tau(y), - y)$. What is required to know to follow the arguments in this paper is:

\begin{enumerate}
\item The lower sequence in diagram (\ref{podiagram}) is exact.
\item Although, as we said before, the operator $Y\to X$ in an exact sequence as above only has to be an isomorphic embedding, in the exact sequences we will consider it is an isomeric embedding. When the inclusion $Y\to X$ (resp $\tau$) is an into isometry, the same occurs to the inclusion $Y'\to \PO$ (resp. to $T$).
\item $T$ is always a contractive operator.
\item The lower sequence in a push-out diagram as above splits if and only if $\tau$ admits a linear continuous extension $\widehat \tau: X\to Y'$. \end{enumerate}
All these facts can be encountered with proofs in \cite{accgmLN}. We prove the announced result. We are sure it is well known; but since we could not find an explicit reference, we include its proof for the sake of completeness.

\begin{lemma} A Banach space $X$ is $1$-injective if and only if it is $1$-complemented in every one codimensional superspace.
\end{lemma}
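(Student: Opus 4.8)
The statement splits into two implications, of which the forward one is essentially the remark already made above and the backward one carries all the content. For the forward implication I would simply observe that if $X$ is $1$-injective and $X$ sits isometrically in a one codimensional superspace $F$, then applying the defining extension property to the identity $\mathrm{id}_X\colon X\to X$, regarded as a norm-one operator from the subspace $X$ of $F$ into $X$, produces an extension $P\colon F\to X$ with $\|P\|\le 1$; since $P$ restricts to the identity on $X$ it is a norm-one projection, so $X$ is $1$-complemented in $F$. (This argument uses nothing about the codimension and in fact reproves that $1$-injective spaces are $1$-complemented in every superspace.)

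For the converse I would exploit the characterization recalled above: $X$ is $1$-injective if and only if every family of mutually intersecting balls has nonempty intersection. So, assuming $X$ is $1$-complemented in every one codimensional superspace, I would fix a family $\{B(x_i,r_i)\}_{i\in I}$ of closed balls in $X$ that pairwise intersect, i.e. $\|x_i-x_j\|\le r_i+r_j$ for all $i,j$, and produce a common point. The idea is to adjoin a single new vector: set $F=X\oplus\R v$ and equip it with a norm $N$ that (i) restricts to the original norm on $X\equiv X\times\{0\}$, and (ii) places the new vector $v\equiv(0,1)$ within distance $r_i$ of each centre, $N(v-x_i)\le r_i$. Granting such an $F$, the hypothesis furnishes a norm-one projection $P\colon F\to X$, and then $\|P(v)-x_i\|=\|P(v-x_i)\|\le N(v-x_i)\le r_i$ for every $i$, so that $P(v)$ lies in $\bigcap_i B(x_i,r_i)$, which is therefore nonempty.

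The construction of $N$ is where the pairwise-intersection hypothesis is used, and it is the only delicate step. I would take $N$ to be the Minkowski gauge of the convex hull of $B_X\times\{0\}$ together with the points $\pm\, r_i^{-1}(x_i,-1)$; by construction each $r_i^{-1}(-x_i,1)$ lies in the unit ball, which is exactly requirement (ii), while $B_X\times\{0\}$ sits in the unit ball, giving $N(\cdot,0)\le\|\cdot\|$. The heart of the matter is the reverse inequality, namely that the slice of the unit ball at level $0$ is no larger than $B_X$; unwinding the definition, this amounts to showing
\[
\Big\|\sum_i a_i x_i-\sum_j c_j x_j\Big\|\le \sum_i a_i r_i+\sum_j c_j r_j
\]
for nonnegative scalars with $\sum_i a_i=\sum_j c_j=s$. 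This is where I expect the real work to be, and it is handled by the identity $\sum_i a_i x_i-\sum_j c_j x_j=s^{-1}\sum_{i,j}a_ic_j(x_i-x_j)$ followed by the pairwise bound $\|x_i-x_j\|\le r_i+r_j$, which collapses the right-hand side to exactly $\sum_i a_i r_i+\sum_j c_j r_j$. Once this slice computation is in place, a routine check that $N$ is a genuine (finite and definite) norm completes the verification that $X\hookrightarrow F$ is isometric and one codimensional, and the argument of the previous paragraph then yields the common point and hence the $1$-injectivity of $X$.
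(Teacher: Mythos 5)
Your proof is correct in substance but follows a genuinely different route from the paper's. The paper keeps the argument at the level of operators: given a norm-one operator $\tau\colon E\to X$ and a one codimensional superspace $F\supset E$, it forms the push-out of $\tau$ along $E\to F$, observes that $X$ sits with codimension one in the push-out space $\PO$, and composes a norm-one projection $\PO\to X$ with the canonical map $F\to\PO$ to obtain an equal-norm extension of $\tau$; $1$-injectivity then follows from the standard fact that equal-norm extension to one extra dimension propagates, by transfinite induction, to arbitrary superspaces. You instead route everything through Nachbin's characterization of $1$-injectivity by the binary intersection property and build, from a pairwise intersecting family of balls, an explicit one codimensional superspace in which the sought common point appears as the image of the adjoined vector under the norm-one projection. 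Both reductions lean on a quoted classical theorem (the paper on the one-dimensional-step induction, you on Nachbin, which the paper itself cites later); the paper's version is shorter given that the push-out machinery is already set up in Section 1, while yours is self-contained at the geometric level, and your gauge computation --- the identity $\sum_i a_ix_i-\sum_j c_jx_j=s^{-1}\sum_{i,j}a_ic_j(x_i-x_j)$ followed by the pairwise bound --- is exactly the right verification that the slice at level zero is $B_X$, hence that the embedding is isometric. It also fits naturally with the ball-intersection theme the rest of the paper develops.

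One point you dismiss as routine is not quite: the definiteness of $N$. If $\inf_i r_i=0$, the set whose gauge you take contains points $r_i^{-1}(x_i,-1)$ escaping to infinity essentially along a fixed direction, its closure contains a ray through the origin, and $N$ vanishes on a nonzero vector, so $F$ is not a normed space. You must dispose of this case separately, which is easy: if $r_{i_n}\to 0$, the pairwise intersection condition $\|x_{i_n}-x_{i_m}\|\le r_{i_n}+r_{i_m}$ makes $(x_{i_n})$ Cauchy, and its limit is readily seen to lie in every ball of the family, so the conclusion holds without any construction. Once $\inf_i r_i>0$, the second-coordinate functional is bounded on your convex set, $N$ is a genuine norm equivalent to the product norm (so $F$ is also complete, which you need for it to qualify as a Banach superspace), and the rest of your argument goes through as written.
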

\begin{proof} One implication, as it has already been mentioned, is clear. Assume that $X$ is $1$-complemented in every one codimensional superspace and let us show that every operator $\tau: E\to X$ admits an equal norm extension to any one codimensional superspace $F$ of $E$. Form the push-out diagram
$$\begin{CD}\label{podiagram}
0@>>> E @>>> F @>>> \R @>>>0\\
&&@V{\tau}VV @VV{T}V @|\\
0@>>> X @>>> \PO @>>>\R@>>>0\end{CD}$$
which clearly means that $X$ is of codimension 1 in $\PO$. Let $P: \PO\to X$ be a norm one projection. The composition $PT$ yields the equal norm extension of $\tau$.\end{proof}

Since, on the other hand, exist $1$-separably injective spaces that are not $1$-injective \cite{accgm1,accgmLN}, one gets that $1$-separably injective are not necessarily $1$-complemented in one codimensional superspaces. Let us show that Davis proof can be nevertheless modified to cover the separable (as well as other cardinal restrictions) case. Precisely:

\begin{teor}\label{jung} Let $\aleph$ be an infinite cardinal. A Banach space $X$ is $(1,\aleph)$-injective if and only if $J_\aleph(X)=1$. In particular, $X$ is $1$-separably injective if and only if $J_s(X)=1$.
\end{teor}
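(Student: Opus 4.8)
The plan is to prove both implications through the reformulation of $(1,\aleph)$-injectivity as a ball-intersection property, and to locate the entire difficulty in one geometric lemma. First I would record the elementary inequality $\delta(A)/2\le r(A)$, valid for every bounded $A$, so that $J_\aleph(X)\ge 1$ automatically and the content of $J_\aleph(X)=1$ is the reverse estimate $r(A)\le \delta(A)/2$ for every $A$ of density character $<\aleph$. On the extension side, a one-dimensional step extending a contraction $t\colon E\to X$ to $E\oplus\langle f\rangle$ reduces, after dividing by the scalar and using homogeneity, to choosing $b=Tf\in X$ with $\|b-t(w)\|\le\|f-w\|$ for all $w\in E$; that is, to finding a point of $\bigcap_{w\in E}B(t(w),\|f-w\|)$. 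The triangle inequality and $\|t\|\le 1$ show these balls are mutually intersecting, and restricting $w$ to a dense subset of $E$ and passing to limits shows it suffices to intersect a family of density character $<\aleph$. Thus both directions come down to comparing the two ball properties.

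For the implication $(1,\aleph)$-injective $\Rightarrow J_\aleph(X)=1$, I would fix $A$ with $\mathrm{dens}\,A<\aleph$, set $E=\overline{\mathrm{span}}\,A$, and adjoin a formal centre: there is a Banach space $F\supseteq E$ isometrically, with $\mathrm{dens}\,F=\mathrm{dens}\,E<\aleph$, containing a vector $u$ with $\|u-a\|=\delta(A)/2$ for every $a\in A$. The one-point amalgamation is admissible precisely because $\|a-a'\|\le\delta(A)=\tfrac{\delta(A)}{2}+\tfrac{\delta(A)}{2}$. Extending the isometric inclusion $E\hookrightarrow X$ to a contraction $T\colon F\to X$ and putting $b=Tu$ gives $\|b-a\|\le\|u-a\|=\delta(A)/2$, whence $r(A)\le\delta(A)/2$. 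As $A$ is arbitrary, $J_\aleph(X)\le 1$, and equality follows.

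For the converse I would assume $J_\aleph(X)=1$ and extend a contraction $t\colon E\to X$, with $\mathrm{dens}\,F<\aleph$, by transfinite recursion along an increasing well-ordered chain of subspaces exhausting a dense subspace of $F$ over $E$, handling limit stages by continuity (the push-out construction keeps the bookkeeping intact and the extension contractive). By the reduction above each successor step needs only that fewer-than-$\aleph$ mutually intersecting balls $B(x_i,r_i)$ in $X$ (and one may assume $R:=\sup_i r_i<\infty$, since the constraints coming from large $\|f-w\|$ are satisfied by any candidate of bounded norm) admit a common point. The natural engine is the identity $B(x,r)=\bigcap_{\|c-x\|\le R-r}B(c,R)$: writing $C=\bigcup_i B(x_i,R-r_i)$, the family $\{B(c,R):c\in C\}$ consists of \emph{equal}-radius balls that are still mutually intersecting, because for $c\in B(x_i,R-r_i)$ and $c'\in B(x_j,R-r_j)$ one has $\|c-c'\|\le(R-r_i)+(r_i+r_j)+(R-r_j)=2R$, so $J_\aleph(X)=1$ would furnish a common centre, which by the identity lies in every $B(x_i,r_i)$.

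The main obstacle is exactly this last feeding of the engine to the \emph{separable} Jung constant: the enlarged family $\{B(c,R):c\in C\}$ has as many centres as the union of balls $C$, hence density character equal to that of $X$, so for non-separable $X$ it is invisible to $J_\aleph$. This is precisely the point where Davis's argument invokes $1$-complementation in a one-codimensional superspace and therefore fails under cardinal restrictions. The plan to repair it is to perform the enlargement not globally but along a transfinite increasing chain of subspaces of density character $<\aleph$, adjoining at each round only the $<\aleph$-many fattening centres needed to exclude the balls that the current approximate centre still violates, and to recover the true common point as the limit of the resulting Cauchy net. Keeping every stage of density $<\aleph$, so that $J_\aleph(X)=1$ genuinely applies, while proving that the radius defect tends to zero, is the crux of the modification of Davis's proof.
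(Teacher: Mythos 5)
Your reduction of both implications to ball--intersection properties is sound, and your forward direction ($(1,\aleph)$-injective $\Rightarrow J_\aleph(X)=1$), obtained by adjoining a formal centre $u$ to $\overline{\operatorname{span}}\,A$ in a superspace of the same density character and extending the inclusion, is correct and essentially equivalent to what the paper does (there it is phrased via a countable dense subset of $A$ and mutually intersecting unit balls). The genuine gap is in the converse, and you have in fact located it yourself without closing it: the fattening identity $B(x,r)=\bigcap_{\|c-x\|\le R-r}B(c,R)$ converts a family of fewer than $\aleph$ mutually intersecting balls of varying radii into an equal-radius family indexed by $C=\bigcup_i B(x_i,R-r_i)$, but $C$ has the density character of $X$, so $J_\aleph(X)=1$ says nothing about it. The proposed repair --- adjoining, along a transfinite chain of subspaces of density $<\aleph$, only the witnesses violated by the current approximate centre and recovering the true centre as a limit --- is not carried out, and nothing in the sketch forces the successive approximate centres to form a Cauchy net (the set of near-centres of a set of diameter $d$ can itself have diameter comparable to $d$), nor the ``radius defect'' to tend to zero. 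A secondary unaddressed point: even for an equal-radius family of density $<\aleph$, $J_\aleph(X)=1$ only yields, for each $\eta>0$, a point within $R+\eta$ of every member, since the radius is an infimum that need not be attained; passing to an exact common point requires the compactness lemma (approximate intersection of a small family of mutually intersecting balls implies exact intersection) that the paper states and your proposal never invokes.

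For contrast, the paper's engine for the hard direction is entirely different: it forms the push-out $H$ of $X$ along the separable space $\overline{\tau(E)}$, observes that the affine hyperplane $h^{-1}(\varepsilon)$ of $H$ is isometric to $X$ and that $B(0,1)\cap h^{-1}(\varepsilon)\cap\jmath(G)$ is a \emph{separable} set of diameter at most $2$ --- so that $J_s(X)=1$ genuinely applies and produces a centre $z_\varepsilon$ --- and then defines the explicit linear extension $\imath_\varepsilon(x)=\jmath(x)-\frac{g(x)}{\varepsilon}z_\varepsilon$, whose norm is bounded by $1+3\varepsilon$ through a convexity argument (Claims 1--3); the $\varepsilon$ is then removed by the compactness lemma. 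Producing a separable set to which the hypothesis applies is exactly the step your fattening construction cannot reproduce, so the central difficulty of the theorem remains open in your proposal.
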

\begin{proof} We will make the proof for $\aleph=\aleph_1$ that corresponds to separable injectivity and indicate the modifications to be made for other cardinals. What we will actually show is that $J_s(X)=1$ implies that every norm one operator $\tau: E\to X$ with $E$ separable (in the general case will be $E$ having density character $<\aleph$) admits, for every $\varepsilon>0$, an extension $\tau_\varepsilon: F\to X$ with $\|\tau_\varepsilon\|\leq 1+\varepsilon$ whenever $F$ is a superspace of $E$ with $\dim F/E=1$. So we set $\R= F/E$ and denote by $f: F \to \R$ the quotient map. Form the commutative diagram

$$\begin{CD}
0@>>> E @>>> F @>f>> \R @>>>0\\
&&@V{\tau}VV @VV{T}V @|\\
0@>>> \overline{ \tau(E)} @>>>G @>g>>\R@>>>0\\
&&@V{\imath}VV @VV{\jmath}V @| \\
0@>>>X @>>> H @>h>> \R @>>>0 \end{CD}$$\\

in which $G$ and $H$ are the corresponding push-out spaces and we have labeled $g, h$ the respective quotient maps. The operator $T$ has norm one and $\jmath$ is an isometric embedding, which clearly shows that it is enough to obtain an extension $\imath_\varepsilon: G \to X$ of $\imath$ with norm at most $1+\varepsilon$. To this purpose, observe that for each $0<\varepsilon<1$ the metric space $h^{-1}(\varepsilon)$ is isometric to $X$. Thus, $B(0,1)\cap h^{-1}(\varepsilon)\cap \jmath(G)$ is a (separable) set of diameter at most $2$ of $X$ must have radius $1$; which means that there exists $z_\varepsilon$ in $h^{-1}(\varepsilon)$ so that
$B(0,1)\cap h^{-1}(\varepsilon) \cap \jmath(G) \subset B(z_\varepsilon, 1)$. We define an extension $\imath_\varepsilon: G\to X$ of the canonical inclusion $\imath$ as
$$\imath_\varepsilon(x) = \jmath(x)  - \frac{g(x)}{\varepsilon}z_\varepsilon.$$

\noindent \textbf{Claim 1.  If $\|x\|\leq 1$ and $g(x) = \varepsilon$ then $\|\imath_\varepsilon(x)\|\leq 1$}.\medskip
\begin{proof}[Proof of the Claim] Pick $\|x\|\leq 1$. If $g(x)=\varepsilon$ then $x\in G\cap B(0,1)\cap g^{-1}(\varepsilon) $ and thus
$\jmath x\in \jmath(G) \cap B(0,1)\cap h^{-1}(\varepsilon) $ so that
$\|\imath_\varepsilon(x)\| = \|\jmath(x) - z_\varepsilon\| \leq 1$ by the choice of $z_\varepsilon$.\end{proof}

Now, if $\beta>\varepsilon$ then define the map $\phi_\beta:  h^{-1}(\varepsilon) \to  h^{-1}(\beta)$ given by
$$\phi_\beta(y) = y +  \frac{\beta - \varepsilon}{\varepsilon}z_\varepsilon.$$

Let us check that if $\|x\|=1$ then $\|\phi_\beta(\jmath  (x))\|\geq 1$. When $x\in g^{-1}(\varepsilon)$ one has $\imath_\varepsilon(x) = \jmath(x) - z_\varepsilon$ and thus it turns out that
\begin{eqnarray*}
\jmath(x) &=& \frac{\beta - \varepsilon}{\beta} (\jmath(x) - z_\varepsilon) + \left(1 - \frac{\beta - \varepsilon}{\beta}\right)\left(\jmath(x) +  \frac{\beta - \varepsilon}{\varepsilon}z_\varepsilon \right)\\
&=& \frac{\beta - \varepsilon}{\beta} \;\imath_\varepsilon(x)+ \left(1 - \frac{\beta - \varepsilon}{\beta}\right)\phi_\beta(\jmath(x)).
\end{eqnarray*}
Therefore, if $\|x\|= 1$ and $g(x)=\varepsilon$, since $\|\imath_\varepsilon(x)\|\leq 1 = \|\jmath (x)\|$, one then has $\|\phi_\beta\jmath(x)\|\geq 1$ as we claimed.\medskip

\noindent \textbf{Claim 2. $ \jmath\left( B(0,1)\cap g^{-1}(\beta)\right) \subset \phi_\beta \jmath \left (B(0,1)\cap g^{-1}(\varepsilon) \right)$.}
\begin{proof}[Proof of the Claim] To simplify the exposition of this part let us call $B_\varepsilon = B(0,1)\cap g^{-1}(\varepsilon)$. The statement to prove is then $ \jmath\left( B_\beta \right ) \subset \phi_\beta \jmath\left( B_\varepsilon \right) $. Consider the convex set

$$\mathcal C = \bigcup_{z \in B_\varepsilon} [\imath_\varepsilon(z), \phi_1 \jmath(z)] = \jmath(B_\varepsilon) + [- z_\varepsilon, \frac{1 - \e}{\e}z_\e].$$
Moreover
$$ \phi_\beta \jmath\left( B_\varepsilon \right) = \mathcal C \cap h^{-1}(\beta)$$

since clearly $\phi_\beta \jmath\left( B_\varepsilon \right)\subset h^{-1}(\beta)$ and also picking $z\in B_\varepsilon$

$$\phi_\beta\jmath(z) = (1-\beta)\left(\jmath z - \frac{g(z)}{\varepsilon}z_\varepsilon\right) + \beta\left(\jmath z + \frac{1-\varepsilon}{\varepsilon}z_\varepsilon\right)\in [\imath_\varepsilon (z), \phi_1\jmath(z)]\subset \mathcal C.$$
This proves the inclusion $\subset$. Now we are going to prove the claim itself. Pick a point $\jmath(x) \in \jmath\left( B_\beta \right ) $ and assume it is not in $\phi_\beta \jmath\left( B_\varepsilon \right) =\mathcal C \cap \jmath g^{-1}(\beta)$. This means that if one picks a point $y\in B_\varepsilon$ with $\|y\|<1$ ---which must be in the interior of $\mathcal C$--- then the segment $[\jmath (y), \jmath (x)]$ must intersect the boundary of $\mathcal C$, which is the union of the three sets
\begin{eqnarray*}
\mathcal C_1 &=& \{\imath_\varepsilon(z) : z\in B_\varepsilon\}\\
\mathcal C_2 &=& \{\phi_1 \jmath (z) : z\in B_\varepsilon\}\\
\mathcal C_3 &=& \bigcup_{\|z\|=1; z\in B_\e} [\imath_\varepsilon(z), \phi_1 \jmath (z)]\\
\end{eqnarray*}
Since $h\jmath(y)=g(y)=\varepsilon$ and $h\jmath(x) = g(x)=\beta$ all points $u\neq \jmath(y)$ in $[\jmath (y), \jmath (x)]$ have $\varepsilon < h(u)<\beta$; so $[\jmath (y), \jmath (x)] \cap \mathcal C_1$ is impossible since $h\imath_\varepsilon(z)=0$ for every $z\in \mathcal C_1$. The intersection
$[\jmath (y), \jmath (x)]\cap\mathcal C_2$ must also be empty since $h \phi_1 \jmath(z) = g \phi_1(z) =\beta$ for every $z\in \mathcal C_2$. The only remaining possibility is the existence of some $u \in [\jmath (y), \jmath (x)] \cap \mathcal C_3$. And this is impossible because it forces $u$ to have $\|u\|\geq 1$: indeed, recall that, as we showed above, $\jmath(z)$ is the norm one point in the interval $[\imath_\varepsilon(z), \phi_1 \jmath(z)]$, and $h\jmath(z)=\varepsilon$.
Assume $u\in (\jmath(y), \jmath(x))\cap \mathcal C_3$. Since $\|\jmath(y)\|\leq 1$ and $\|\jmath(x)\|\leq 1$, necessarily $\|u\|<1$. Further, $h(u)\in (h\jmath(y), h\jmath(x))=(\e, \beta)$. On the other hand, $u\in (\imath_\e(z), \phi_1\jmath(z))$ for some $z\in B_\e$ with $\|z\|=1$. Since
 $h\jmath(z)=\e$ and $h\phi_1 \jmath(z)=\beta$, necessarily  $u\in (\jmath(z), \phi_1\jmath(z))$. Finally, $\|\jmath(z)\|=1, \|\imath_\e(z)\|\leq 1$ by Claim 1 and $\|\phi_1\jmath(z)\| \geq 1$ by the claim between the Claims 1 and 2. Thus $\|u\|\geq 1$, a contradiction.\end{proof}

Now, from the containment in Claim 2 we immediately obtain
$$\imath_\varepsilon \left( B(0,1)\cap g^{-1}(\beta) \right) \subset \phi_\beta \jmath\left( B(0,1)\cap g^{-1}(\varepsilon) \right)- \frac{\beta}{\e}z_\e\subset B(0,1)$$
since $\imath_\varepsilon( \jmath(x) + \frac{\beta-\varepsilon}{\varepsilon}z_\varepsilon) =\imath_\e \jmath (x)= \jmath (x)  - z_\e$ (for the first inclusion); and for every $x\in B(0,1)\cap g^{-1}(\varepsilon)$ one has
$\phi_\beta \jmath(x) - \frac{\beta}{\e}z_\e = \jmath(x) + \frac{\beta - \e}{\e}z_\e - \frac{\beta}{\e}z_\e =  \jmath(x) - z_\e \in B(0,1)$
(for the second). Therefore, if $\|x\|\leq 1$ and $g(x)=\beta>\varepsilon$, namely, if $x \in B(0,1) \cap g^{-1}(\beta)$ then we have just shown $\|\imath_\varepsilon(x)\|\leq 1$. This, in combination with Claim 1 proves \medskip

\noindent \textbf{Claim 3. If $\|x\|=1$ and $g(x) \geq \varepsilon$ then $\|\imath_\varepsilon(x)\|\leq 1$}.\medskip

We are ready to conclude that $\|\imath_\varepsilon\|\leq 1+2\varepsilon$ can be obtained, and the only remaining case is when $\|x\|=1$ and $g(x)<\varepsilon$.
An immediate consequence of the preceding arguments is that $\|z_\varepsilon\| \leq  2\varepsilon/\beta<2$: pick $\|v\|=1$ with $\varepsilon < g(v)=\beta<1$. Since
$\|\imath_\varepsilon( v) \|= \|\jmath (v) - \frac{\beta}{\varepsilon}z_\varepsilon\| \leq 1$ it follows that $\|z_\varepsilon\| \leq  2\frac{\varepsilon}{\beta}<2$. And from that
we obtain $\|\phi_\beta z_\varepsilon\| = \|\frac{\beta}{\varepsilon}z_\varepsilon\| <2$. The proof concludes now: since
$$\imath_\varepsilon (x) = \jmath( x) - \frac{g(x)}{\varepsilon}z_\varepsilon = \jmath (x ) - \frac{g(x)}{\beta }\phi_\beta z_\varepsilon $$
it follows that
$$\|\imath_\varepsilon (x) \|\leq  1 + \frac{2}{\beta}|g(x)| \leq  1 + \frac{2\varepsilon}{\beta} \leq 1 + 3\e.$$
just picking $\beta>2/3$.\medskip

We have proved that there exist, for every $\e>0$, $(1+3\varepsilon)$-extensions to one-codimensional superspaces. We need to show now that there exist equal norm extensions to one-codimensional superspaces. The point is the equivalence between extension properties and properties of intersections of balls. Nachbin \cite{nachbin} (see also \cite[Theorem 2.1]{zipphandbook}) proved that a Banach space $X$ is $1$-injective if and only if every family of mutually intersecting balls has nonempty intersection. It can be attributed to Lindenstrauss that a Banach space is $1$-separably injective if and only if every \emph{countable} family of mutually intersecting balls has nonempty intersection: see the comments in \cite[Proof of the theorem]{lindCK}. As well as it can be attributed to him, in general (see \cite{accgm4} and \cite[5.2 and 5.5.4]{accgmLN}), that a Banach space is $(1,\aleph)$-injective if and only if every family of size $< \aleph $ of mutually intersecting balls has nonempty intersection. A combination of those ideas with the arguments of Lindenstrauss in \cite[Theorem 6.10]{lindmemo} yields

\begin{lemma} If every sequence $B(x_n, r_n)$ of mutually intersecting balls is such that $\bigcap_n B(x_n, r_n+\varepsilon)\neq \emptyset$ for every $\varepsilon>0$ then any countable family $B(x_n, r_n)$ of mutually intersecting balls has $\bigcap_n B(x_n, r_n)\neq \emptyset$. The result remains valid for families of size $< \aleph$.\end{lemma}

Let us remark that also Davis in \cite[p.316]{davis} formulates the compactness argument for radius $1$ balls in the forms: \emph{If every sequence $B(x_n, 1)$ of mutually intersecting balls is such that $\bigcap_n B(x_n, 1+\varepsilon)\neq \emptyset$ for every $\varepsilon>0$ then also every sequence $B(x_n, 1)$ of mutually intersecting balls is such that $\bigcap_n B(x_n, 1)\neq \emptyset$}.\medskip

This means that a Banach space $X$ is $1$-separably injective if and only if, for every $\varepsilon>0$, every separable subspace $S\subset \ell_\infty$ and every $x\in \ell_\infty$, every norm one operator $\tau: S\to X$ admits an extension $\tau_\varepsilon: S + [x]\to X$ with $\|\tau_\varepsilon\|\leq 1+\varepsilon$. The $(1,\aleph)$-injective) version is easy to reformulate too. This concludes the proof.\end{proof}

The geometrical conclusion of the previous proof is:

\begin{cor} Let $\aleph$ be a cardinal and let $X$ be a Banach space. Every family of size $<\aleph$ of mutually intersecting balls has nonempty intersection if and only if every family of size $<\aleph$ of mutually intersecting balls of radius $1$ has nonempty intersection.\end{cor}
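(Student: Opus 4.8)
The plan is to exploit the chain of equivalences already secured and close a logical loop, so that the two conditions in the statement become two links of a single cycle. Write (G) for ``every family of size $<\aleph$ of mutually intersecting balls has nonempty intersection'' and (R) for the same assertion restricted to balls of radius $1$. The implication (G)$\Rightarrow$(R) is immediate, since a family of radius-$1$ balls is in particular a family of balls. By the Nachbin--Lindenstrauss characterization recalled inside the proof of Theorem \ref{jung}, (G) is exactly the statement that $X$ is $(1,\aleph)$-injective, and by Theorem \ref{jung} this is in turn equivalent to $J_\aleph(X)=1$. Thus the only point that still requires an argument is the single implication (R)$\Rightarrow J_\aleph(X)=1$, after which the cycle $J_\aleph(X)=1\Leftrightarrow(1,\aleph)\text{-injective}\Leftrightarrow(\mathrm{G})\Rightarrow(\mathrm{R})\Rightarrow J_\aleph(X)=1$ forces all four conditions to coincide; in particular (G)$\Leftrightarrow$(R), which is the Corollary.

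For (R)$\Rightarrow J_\aleph(X)=1$ I would argue by homogeneity. First recall that $J_\aleph(X)\geq 1$ holds unconditionally: for any two points $a,b$ of a bounded set $A$ and any centre $c$ one has $\|a-b\|\le\|a-c\|+\|c-b\|\le 2\sup_{x\in A}\|x-c\|$, whence $r(A)\ge\delta(A)/2$. For the reverse estimate take a closed bounded $A$ with density character $<\aleph$ and $\delta(A)=d>0$; passing to a dense subset $D\subseteq A$ of cardinality $<\aleph$ alters neither the diameter nor the radius, so it suffices to bound $r(D)$. Rescaling produces $D'=(2/d)D$, a set of diameter $2$ and cardinality $<\aleph$. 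The balls $\{B(x',1):x'\in D'\}$ are then mutually intersecting (their centres have pairwise distance $\le 2$) and number $<\aleph$, so (R) furnishes a point $b$ in their intersection; this gives $\sup_{x'\in D'}\|b-x'\|\le 1$, hence $r(D')\le 1$ and, undoing the scaling, $r(A)=r(D)\le d/2=\delta(A)/2$. Taking the supremum over all admissible $A$ yields $J_\aleph(X)\le 1$, and together with $J_\aleph(X)\ge 1$ we get $J_\aleph(X)=1$.

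The step I expect to demand the most care is exactly this passage from (R) to $J_\aleph(X)=1$, rather than its superficially symmetric converse. The reason is an attainment subtlety: $J_\aleph(X)=1$ only asserts that each admissible $A$ satisfies $r(A)\le\delta(A)/2$, i.e. that the infimum defining the radius is $\le 1$ after normalization, which a priori yields only centres for the enlarged balls $B(x_i,1+\varepsilon)$ and not a genuine point of $\bigcap_i B(x_i,1)$. This is precisely the gap bridged within the proof of Theorem \ref{jung} by the compactness lemma and by the Nachbin--Lindenstrauss equivalence, so by routing the argument through $(1,\aleph)$-injectivity I let those results absorb the attainment issue rather than re-proving it. The only other point meriting a line of justification is the mismatch between ``size $<\aleph$'' in (G), (R) and ``density character $<\aleph$'' in the definition of $J_\aleph$, which is harmless because both diameter and radius are computed on a dense subset of cardinality $<\aleph$.
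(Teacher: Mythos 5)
Your proposal is correct and follows essentially the same route as the paper: the easy implication is dismissed, the radius-$1$ hypothesis is applied to the (rescaled) dense subset of cardinality $<\aleph$ of a set of diameter $2$ to produce a common center and conclude $J_\aleph(X)=1$, and then Theorem \ref{jung} converts this into $(1,\aleph)$-injectivity, which gives the general intersection property. Your explicit remarks on the attainment subtlety and on the passage from density character to cardinality are accurate and merely make precise what the paper leaves implicit.
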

\begin{proof} One implication is obvious. As for the other, let us again simplify the exposition working with $\aleph=\aleph_1$. If every countable family of mutually intersecting balls of radius $1$ has nonempty intersection then $J_s(X)=1$: indeed, let $A$ be a separable set with diameter 2 and let $\{x_n\}_n$ be a countable dense subset of $A$. The intersection of each two balls $B(x_i, 1)\cap B(x_j, 1)$ is nonempty
and thus there is some $p\in \cap_n B(x_n, 1)$. This means that $A\subset B(p, 1+\varepsilon)$ and the radius of $A$ is at most $1+\varepsilon$, which implies $J_s(X)=1$. Thus $X$ is $1$-separably injective by Theorem \ref{jung}, hence every countable family of mutually intersecting balls has nonempty intersection.\end{proof}

Thus, the difference between $1$-separable (resp. $(1, \aleph)$) injectivity and having separable Jung (resp. $J_\aleph$) constant 1 is whether intersection properties of sequences (resp. families of size $<\aleph$) of radius $1$ balls pass to arbitrary sequences (resp. families of size $<\aleph$) of balls, as it is the case.

\section{Uses and Applications}

\subsection{Bounded sets without center} Thus, in $1$-separably injective spaces that are not $1$-injective there are closed bounded sets
with diameter 2 but radius $>1$ and these cannot be separable. The three main examples in the literature (see \cite{accgmLN}) of $1$-separably injective spaces that are not $1$-injective are:
\begin{enumerate}
\item Given an uncountable set $\Gamma$ then the space $\ell_\infty^c(\Gamma)$ of bounded functions with countable support.
\item The space $\ell_\infty/c_0$.
\item Ultrapowers of Lindenstrauss spaces with respect to countably incomplete ultrafilters on $\N$.
\end{enumerate}

It is worth to carefully exam why such exotic bounded sets exist in those spaces.
\begin{itemize}
\item Fix as $\Gamma$ a set of size the continuum $\mathfrak c$. The set $V$ of elements of $\ell_\infty^c(\Gamma)$ taking values in $[0,1]$ has diameter $1$ but no center for radius $1/2$ balls since such center should be the constant function, which is not in $\ell_\infty^c(\Gamma)$. On the other hand splitting $\Gamma = \Gamma_1 \cup \Gamma_2$ with both $\Gamma_i$ uncountable then the set $A_1 \cup - A_2$ with $A_i = \{f: \mathrm{supp} f \subset \Gamma_i\} $ has diameter 1 and radius 1 (this example is taken from \cite{amir}). Hence $J(\ell_\infty^c(\Gamma) )=2$.

\item An example of a diameter $1$ set with radius $1$ in $\ell_\infty/c_0$ has been kindly provided to us by Manuel Gonz\'alez: Suppose that $\{r_i : i \in I\}$ is the set of all branches in the dyadic tree $\Delta$, so that $|I|=\mathfrak c$. We enumerate the nodes of $\Delta$ in the usual way, so that we can assume $\Delta = \N$. Let $1_i$ be the characteristic function of $r_i$, so that we can assume $1_i \in \ell_\infty$. As it is well-known, the images $g_i$ of $1_i$ in $\ell_\infty/c_0$ generate a subspace isometric to $c_0(I)$ because the intersection of two different branches is finite. As a consequence, for every $\varepsilon \in \{-1,1\}^I$ the set $A_\varepsilon = \{\varepsilon(i) \cdot g_i : i \in I\}$ has diameter 1. Now, for each $r<1$ there is a choice of signs $\varepsilon \in \{-1,1\}^I$ so that $A_\varepsilon$ cannot be contained in a ball $B(h_\varepsilon,r)$  since otherwise
\begin{eqnarray*}2 &=&  \|\varepsilon(i)\cdot g_i  - \eta(i)\cdot g_i \|\\ &\leq& \|\varepsilon_i\cdot g_i - h_\varepsilon\|+ \|h_\varepsilon - h_\eta\| + \|h_\eta- \eta_i\cdot g_i\|\\
&\leq& 2r + \|h_\varepsilon - h_\eta\|\end{eqnarray*}
yields $\|h_\varepsilon - h_\eta\| \geq 2(1-r)$. Thus, $\{h_\varepsilon : \varepsilon \in \{-1,1\}^I\}$ is a $2(1-r)$-separated family of cardinality $2^{\mathfrak c}$ in a space, $\ell_\infty/c_0$, of density character $\mathfrak c$, which is impossible. Consequently   $J(\ell_\infty/c_0)=2$.
\item The case of ultrapowers is quite similar to the previous one.\end{itemize}

\subsection{Separable injectivity and the Gr\"unbaum expansion constant} Gr\"unbaum considers in \cite{grun1,grun2} the expansion constant $E(X)$ of a Banach space $X$ as the infimum of those $\lambda>0$
such that whenever one has a family $B(X_i, r_i)$ of mutually intersecting balls then $\cap_i B(x_i, \lambda r_i)\neq \emptyset$.
While \cite{grun1} is devoted to the study of the expansion constant in finite-dimensional spaces, \cite{grun2} considers the expansion constant in infinite dimensional spaces and establishes that the extension constant coincides with the projection constant defined as
$$p(X) = \sup_Y \inf_P \{ \|P\|: P: Y\to X\}$$
where $Y$ runs over all $1$-codimensional superspaces of $X$ and $P$ over all projections of $Y$ onto $X$. It is therefore clear that $E(X)=1$ and the infimum is attained (what Gr\"unbaum calls ``$E(X)=1$ is exact") if and only if $X$ is $1$-injective. Since one has \cite[(iii)]{grun1} that $J(X)\leq E(X)$ it turns out that $E(X)=1$ implies $J(X)=1$ so $X$ is $1$-injective and thus $E(X)=1$ is exact. This is relevant since Gr\"unbaum provides in \cite{grun2} an example of a space $X$ for which $E(X)=2=J(X)$ and such that $J(X)$ is exact but $E(X)$ is not.

If one defines the separable extension constant $E_s(X)$ (and its corresponding cardinal versions $E_\aleph(X)$) by simply restricting the size of the family of balls, one still has $J_s(X)\leq E_s(X)$ (and, in general, $E_\aleph(X)\leq J_\aleph(X)$) and therefore, after Theorem \ref{jung} one gets: \emph{ $X$ is $1$-separably injective if and only if $E_s(X)=1$ is exact}, which is  \cite[Lemma 2.30]{accgmLN}. In general, when $X$ is $\lambda$-separably injective then $E_s(X)\leq \lambda$ \cite[Lemma 2.33]{accgmLN}. It is also clear that there is no simple version of the projection constant $p$ that characterizes $J_s$.

\subsection{On the stability of the Jung constants} Most of the known results about stability of the Jung constant can be found in \cite{amir} and most of them treat the case of reflexive spaces and thus are not interesting for us. It was already shown by Amir \cite[Prop.1.1]{amir} that every finite-codimensional subspace of a $C(K)$-space with $K$ not extremally disconnected (i.e., every non $1$-injective $C(K)$-space) has Jung constant $2$, which proves Franchetti's conjecture \cite{fran1} that $J(C(K))<2$ implies $J(C(K))=1$. A version of Amir's theorem which somehow extends Franchetti's conjecture to $J_s$ appears in \cite[Proposition 2.4]{accgmLN}. Recall from \cite{accgmLN} that a $C(K)$ space is $1$-separably injective if and only if $K$ is an $F$-space (cf. \cite{accgmLN} for its definition). Amir proves that $E_s(C(K))<2$ implies that $K$ is an $F$-space, therefore $C(K)$ is $1$-separably injective, and thus $J_s(C(K))=1$. Cardinal modifications of \cite[Proposition 2.4]{accgmLN} obtained taking \cite[Proposition 5.12]{accgmLN} into account yield that $E_\aleph(C(K))<2$ implies that $K$ is an $F_\aleph$-space \cite[Definition 5.19]{accgmLN}, and therefore $C(K)$ is $(1, \aleph)$-separably injective
\cite[Theorem 5.16]{accgmLN}, and thus $J_\aleph(C(K))=1$. This suggests the question of whether $J_\aleph(\mathcal C)<2$ implies $J_\aleph(\mathcal C)=1$ for a space $C(K)$ of continuous functions on a compact space $K$.

Given a Banach space parameter, say $\eta(\cdot)$, it is usually a relevant question whether $\eta(X)=\eta(X^{**})$.   For instance, the problem of whether $T(X)=T(X^{**})$ for the thickness or Whitley constant \cite{whit} was posed in \cite{castpapisimo} and negatively solved in \cite{aaa}, while the same problem for the Kottman constant was posed in \cite{castpapisimo} and negatively solved in \cite{cgp}. The situation for Jung constants is not different, although in this case we already know that $J(X)$ (resp. $J_s(X)$) and $J(X^{**})$ (resp. $J_s(X^{**})$) can be different. Indeed, $J(c_0)=2=J_s(c_0)$ while $J(\ell_\infty)=J_s(\ell_\infty)=1$.
In general, given a countably incomplete ultrafilter $\U$ on $\N$, one has $J_s(C[0,1])=2$ but $J_s(C[0,1]_\U)=1$ since according to \cite{accgm4,accgmLN} the ultrapower of a Lindenstrauss space is $1$-separably injective, as we have already mentioned. On the other hand, $J(\ell_\infty)=1$ but $J((\ell_\infty)_\U)=2$ since, again according to \cite{accgm4,accgmLN} no infinite-dimensional ultrapower is injective.

\subsection{$1$-separably injective spaces} In \cite[Problem 7]{accgmLN} it is posed the problem of the existence of $1$-separably injective subspaces of $\ell_\infty$ not isomorphic to $\ell_\infty$. We can present a partial answer:

\begin{lemma}\label{piero} A $1$-separably injective subspace $\Theta \subset \ell_\infty$ containing the canonical copy of $c_0$ is $\ell_\infty$.
\end{lemma}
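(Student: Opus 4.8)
The plan is to show that $\Theta$ contains every characteristic function $1_A$, $A\subseteq\N$; since the simple (finitely-valued) sequences are dense in $\ell_\infty$ in the sup norm and $\Theta$ is closed, this forces $\Theta=\ell_\infty$. The cases in which $A$ or its complement $A^c$ is finite are immediate once one knows that the constant sequence $\mathbf 1=(1,1,\dots)$ lies in $\Theta$, so the heart of the matter is a fixed $A$ with both $A$ and $A^c$ infinite.

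For such an $A$ I would form the separable closed subspace $D=c_0+[\mathbf 1]+[1_A]\subseteq\ell_\infty$, which contains $c_0$. Using the $1$-separable injectivity of $\Theta$ in its defining form (equal-norm extension of operators defined on a subspace of a separable space), I extend the isometric inclusion $c_0\hookrightarrow\Theta$ to an operator $T:D\to\Theta$ with $\|T\|\le1$ and $T|_{c_0}=\mathrm{id}$. Write $\eta=T(\mathbf 1)$ and $\theta=T(1_A)$, both elements of $\Theta$ of norm at most $1$.

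The key computational step is a coordinate-pinning observation: if $x\in\ell_\infty$ is $\{0,1\}$-valued with infinite support $S$ and $\phi\in\Theta$ satisfies $\|\phi\|\le1$ together with $\|\phi+c\|_\infty\le\|x+c\|_\infty$ for all $c\in c_0$ (which holds for $\phi=T(x)$, since $T$ is contractive and fixes $c_0$), then $\phi_n=1$ for every $n\in S$. Indeed, testing with $c=-t\,e_n$ for $n\in S$ and $t\ge2$ gives $\|x+c\|_\infty=t-1$, while $\|\phi+c\|_\infty\ge|\phi_n-t|=t-\phi_n\ge t-1$; the inequality then forces $\phi_n\ge1$, hence $\phi_n=1$. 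Applying this to $x=\mathbf 1$ yields $\eta=\mathbf 1$, so in particular $\mathbf 1\in\Theta$; applying it to $x=1_A$ gives $\theta_n=1$ for every $n\in A$.

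The decisive point — and the step I expect to be the genuine obstacle — is that a single extension does \emph{not} pin down $\theta$ on $A^c$: the contraction $T$ has real freedom there, so $T(1_A)$ need not equal $1_A$ and the naive extension argument stalls. I would resolve this by exploiting linearity together with the symmetric complement. Since $\mathbf 1\in D$ and $T(\mathbf 1)=\mathbf 1$, one has $T(1_{A^c})=T(\mathbf 1)-T(1_A)=\mathbf 1-\theta$. Running the same pinning observation with $x=1_{A^c}$ (which has infinite support) shows $(\mathbf 1-\theta)_n=1$ for $n\in A^c$, that is, $\theta_n=0$ there. Combined with $\theta_n=1$ on $A$ this yields $\theta=1_A$, so $1_A=\theta\in\Theta$. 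Feeding this back into the density argument of the first paragraph then gives $\Theta=\ell_\infty$.
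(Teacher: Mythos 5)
Your proof is correct, but it follows a genuinely different route from the paper's. The paper argues in one step via the ball--intersection (equivalently, Jung constant) characterization of $1$-separable injectivity: for any $x=(x_n)\in\ell_\infty$ the countable set $\{(x_n+\|x\|)e_n,\ (x_n-\|x\|)e_n : n\in\N\}$ lies in $c_0\subset\Theta$, has diameter $2\|x\|$, and has $x$ as its \emph{unique} center of radius $\|x\|$ in $\ell_\infty$; since $J_s(\Theta)=1$ forces $\Theta$ to contain a center, $x\in\Theta$ directly, with no density step. You instead work straight from the operator-extension definition: you extend the inclusion $c_0\hookrightarrow\Theta$ to a contraction on $c_0+[\mathbf 1]+[1_A]$ and use the coordinate-pinning computation (testing against $-te_n$) to force $T(1_A)=1$ on $A$, then exploit linearity via $T(1_{A^c})=\mathbf 1-T(1_A)$ to force $T(1_A)=0$ on $A^c$, so $1_A\in\Theta$; density of finitely-valued sequences and closedness of $\Theta$ finish the job. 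Your symmetric use of the complement is exactly the right device to overcome the fact that a single contractive extension does not pin coordinates off the support, and I verified the pinning estimate is sound (it uses only $\|T\|\le1$, $T|_{c_0}=\mathrm{id}$ and $\|T(x)\|\le1$). What each approach buys: the paper's argument is shorter and captures every $x\in\ell_\infty$ at once, but it leans on the equivalence between $1$-separable injectivity and the countable ball-intersection property (the content of Theorem~\ref{jung} and the Lindenstrauss-type results quoted there); yours is longer but uses only the raw definition of $1$-separable injectivity, which makes it more self-contained.
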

\begin{proof} Every point $x=(x_n) \in \ell_\infty$ is the unique center of a countable set of $c_0$: the set $\{(x_n +\|x\|)e_n, (x_n -\|x\|)e_n: n\in \N\}$ has diameter $2\|x\|$ and radius $\|x\|$ with $x$ the only center.\end{proof}

The result can be extended to subspaces $\Theta$ whose unit ball is weak*-dense in the unit ball of $\ell_\infty$, but one is still far from
showing even that no $1$-separably injective subspace of $\ell_\infty$ containing an isometric copy of $c_0$, different from $\ell_\infty$, exists. The following application was suggested by F. Cabello:

\begin{corollary} $\beta \N$ is the only compactification of $\N$ that is an $F$-space
 \end{corollary}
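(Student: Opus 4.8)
The plan is to transfer the problem to $\ell_\infty$ and invoke Lemma \ref{piero}. Let $K$ be a compactification of $\N$ that is an $F$-space, so that $C(K)$ is $1$-separably injective by the characterization quoted above. Since $\N$ is dense in $K$, I would first observe that the restriction map $\rho:C(K)\to\ell_\infty$, $\rho(f)=f|_\N$, is an isometric embedding, because $\sup_{k\in K}|f(k)|=\sup_{n\in\N}|f(n)|$ for every $f\in C(K)$ by continuity and density. Its image $\Theta=\rho(C(K))$ is then a closed subspace of $\ell_\infty$ isometric to $C(K)$, hence itself $1$-separably injective.

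Next I would check that $\Theta$ contains the canonical copy of $c_0$. As $\N$ is locally compact, it is open in $K$, so every $n\in\N$ is an isolated point of $K$. Given $f\in c_0$, extend it to $\widetilde f:K\to\R$ by setting $\widetilde f\equiv 0$ on $K\setminus\N$. This $\widetilde f$ is continuous: at isolated points of $\N$ continuity is automatic, and at any $x\in K\setminus\N$ and each $\varepsilon>0$ the set $K\setminus\{n:|f(n)|\geq\varepsilon\}$ is an open neighbourhood of $x$ (its complement is finite, hence closed) on which $|\widetilde f|<\varepsilon$. Thus $\widetilde f\in C(K)$ with $\rho(\widetilde f)=f$, so $c_0\subset\Theta$. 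By Lemma \ref{piero} I conclude $\Theta=\ell_\infty$; equivalently, every bounded function on $\N$ is the restriction to $\N$ of some function in $C(K)$.

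Finally I would translate this into the equality $K=\beta\N$. The inclusion $\N\hookrightarrow K$ extends, by the universal property of the Stone--\v Cech compactification, to a continuous surjection $q:\beta\N\to K$ that is the identity on $\N$. It induces the isometric algebra embedding $q^*:C(K)\to C(\beta\N)=\ell_\infty$, $q^*(f)=f\circ q$, and since $q$ fixes $\N$ one has $q^*(f)|_\N=f|_\N=\rho(f)$. Because $\rho$ is now onto $\ell_\infty=C(\beta\N)|_\N$, for each $g\in C(\beta\N)$ there is $f\in C(K)$ with $f\circ q$ and $g$ agreeing on the dense set $\N$, hence on all of $\beta\N$; so $q^*$ is surjective. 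A continuous surjection of compacta whose transpose is surjective must be injective (if $q(x)=q(y)$ with $x\neq y$, a Urysohn function separating $x$ and $y$ could not lie in the range of $q^*$), hence $q$ is a homeomorphism and $K=\beta\N$.

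The step requiring the most care is the last one: one must be sure that the Banach-space equality $\Theta=\ell_\infty$ really encodes the universal extension property that singles out $\beta\N$ among all compactifications of $\N$, and that algebraic surjectivity of $q^*$ forces $q$ to be injective. Everything before it is routine once the isometric picture $C(K)\cong\Theta\subset\ell_\infty$ is set up, the whole point being that every $F$-space compactification of $\N$ falls within the scope of Lemma \ref{piero}.
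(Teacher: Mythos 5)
Your proposal is correct and follows essentially the same route as the paper: embed $C(K)$ into $\ell_\infty$ via the quotient map $\beta\N\to K$ (equivalently, restriction to $\N$), note the image contains the canonical $c_0$, and apply Lemma \ref{piero}. You merely fill in details the paper leaves implicit, namely the continuity of the extension by zero showing $c_0\subset\Theta$ and the final step that surjectivity of $q^*$ forces $q$ to be a homeomorphism.
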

 \begin{proof} Let $\gamma \N$ be a compactification of $\N$. An extension of the canonical map $\N\to \gamma\N$ yields a quotient map $\beta\N \to \gamma \N$. Thus $C(\gamma \N)$ is a subspace of $C(\beta \N)=\ell_\infty$ that contains the canonical copy of $c_0$. If $\gamma \N$ is an $F$-space then $C(\gamma \N)$ is $1$-separably injective, and thus it must be isomorphic to $\ell_\infty$.\end{proof}

Let now $X$ be a subspace of $\ell_\infty$. Consider the set $\mathcal C_0\subset X^\N$ of all countable families of elements of $X$ such that $\|x_n - x_m\|\leq 2$ for $n\neq m$; any family $F\in \mathcal C_0$ has (at least) a center $z(F)\subset \ell_\infty$. Let $z(X)$ be a set containing a center for each family in $\mathcal C_0$. Set $X_1= [X + z(X)]\subset \ell_\infty$ and proceed inductively. Given a countable ordinal $\alpha$ for which $X_\alpha \subset \ell_\infty$ has been chosen, if $\alpha$ is not a limit ordinal then let $\mathcal C_\alpha$ be the set of all countable families of elements of $X_\alpha$ such that $\|x_n - x_m\|\leq 2$ for $n\neq m$. For each $F\in \mathcal C_\alpha$ let $z(X_\alpha)$ be a set of centers of the families in $\mathcal C_\alpha$. Finally set $X_{\alpha+1} = [ X_\alpha + z(X_\alpha)]$ in $\ell_\infty$. If $\alpha$ is a limit ordinal work with the closure of $\bigcup_{\beta<\alpha} X_\beta$. Iterate the process $\omega_1$ steps to get a $1$-separably injective subspace $X_{\omega_1}$ of $\ell_\infty$: indeed, giving a mutually intersecting sequence of balls $\{B(x_n, 1): n\in \N\}$ the set $\{x_n: n\in \N\}$ is contained in some $X_\alpha$, hence it has a center in $X_{\alpha+1}$ and thus in $X_{\omega_1}$. Thus, $J_s(X_{\omega_1})=1$ and, by Theorem \ref{jung}, $X_{\omega_1}$ is $1$-separably injective. There are however many possible choices for the successive sets of centers, so the final resulting space $X_{\omega_1}$ is, in principle, not unique. In fact, if \cite[Problem 7]{accgmLN} above has a negative solution and no $1$-separably injective subspaces of $\ell_\infty$ different from $\ell_\infty$ exist, all spaces $X_{\omega_1}$ must be the isomorphic to $\ell_\infty$. Let us now exam what occurs when the centers are chosen outside $\ell_\infty$. Recall that given a Banach space $X$ there are several constructions in the literature \cite{accgm1,accgm2,accgmLN,castsua} of $1$-separably injective spaces $\mathscr S(X)$ containing an isometric copy of $X$ and with the additional property that every operator $\tau: X\to \mathcal S$ from $X$ into a $1$-separably injective space  $\mathcal S$ admits an equal norm extension $\mathscr S(X)\to \mathcal S$. These were called $1$-separably injective covers of $X$. One has:

\begin{lemma} $\;$\begin{enumerate}
\item Under {\sf CH}, there is a subspace of $G\subset \ell_\infty$ such that no $1$-separably injective cover $\mathscr S(G) $ can be isomorphic to $\ell_\infty$ or a subspace of $\ell_\infty$.
\item Under {\sf MA +} $\mathfrak c = \aleph_2$, any $1$-separably injective cover $\mathscr S(c_0) $ of $c_0$ is not isomorphic to $\ell_\infty$ and is not a subspace of $\ell_\infty$ .
\end{enumerate}
\end{lemma}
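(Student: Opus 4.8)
My plan is to reduce both statements to a single metric invariant governing embeddability into $\ell_\infty$: a Banach space $X$ is (isomorphic to) a subspace of $\ell_\infty$ if and only if $B_{X^*}$ is weak${}^*$-separable, equivalently $X$ carries a countable norming set. Two features make this usable. First, weak${}^*$-separability of the dual ball passes to subspaces, since the restriction $X^*\to Y^*$ is a weak${}^*$-continuous map carrying $B_{X^*}$ onto $B_{Y^*}$; hence it suffices to locate, inside the relevant cover, one subspace whose dual ball is not weak${}^*$-separable, the canonical witnesses being $c_0(\omega_1)$ and $\ell_\infty/c_0$ (any countable family of functionals has countable total support and so misses a coordinate, resp. a branch). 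Second, ``not isomorphic to $\ell_\infty$'' will follow either from the same embeddability obstruction (as $\ell_\infty$ is a subspace of itself) or, in the borderline case where a cover would sit inside $\ell_\infty$ through the canonical copy of $c_0$, from Lemma~\ref{piero}, which forces such a cover to be all of $\ell_\infty$, hence of density character $\mathfrak c$; a density count then closes the gap.

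For part (1), under {\sf CH} I would choose $G\subset\ell_\infty$ of density $\aleph_1=\mathfrak c$ so that being $1$-separably injective alone forces any cover to contain an isometric copy of $c_0(\omega_1)$. Concretely I would build $G$ as the closure of an increasing $\omega_1$-chain of separable subspaces of $\ell_\infty$, arranging an $\omega_1$-indexed family of countable sets $F_\alpha\subset G$ of diameter $2$ whose centres --- which by Theorem~\ref{jung} must exist in every $1$-separably injective space containing $G$ --- are forced into the mutually $1$-separated $c_0(\omega_1)$-pattern, independently of the choices made in forming the cover. Since $G\subset\ell_\infty$, it cannot itself contain $c_0(\omega_1)$, but the adjoined centres live in the cover rather than in $G$; thus every cover $\mathscr S(G)$ contains $c_0(\omega_1)$, its dual ball is not weak${}^*$-separable, and so no cover embeds into $\ell_\infty$ nor is isomorphic to it. The {\sf CH} hypothesis is used to run the $\omega_1$-step amalgamation within density $\aleph_1$, via $\aleph_1^{\aleph_0}=\aleph_1$.

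For part (2) the analogous \emph{synthesis} of $c_0(\omega_1)$ out of the separable space $c_0$ is blocked, so the argument must be global and import the set-theoretic input. Here I would use, from the theory of \cite{accgmLN} together with the non-universality phenomena of Brech and Koszmider at $\aleph_1$ under {\sf MA}${}_{\aleph_1}$ (which follows from {\sf MA}${}+\mathfrak c=\aleph_2$), that the $1$-separable injectivity forced on a cover of $c_0$ cannot be realised with a weak${}^*$-separable dual: every such cover must contain an isometric copy of a space of type $\ell_\infty/c_0$ (equivalently of $c_0(\omega_1)$). This yields at once that no cover embeds into $\ell_\infty$ and, a fortiori, that none is isomorphic to $\ell_\infty$. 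To pass from a single distinguished cover to \emph{any} cover, I would invoke the universal property, which for an arbitrary cover $S$ produces a norm-one operator from the minimal cover into $S$ fixing $c_0$; by essentiality this operator is an isometric embedding, so the bad subspace sits inside every cover and the obstruction propagates.

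The main obstacle I anticipate is precisely this ``any cover'' quantifier combined with the set-theoretic content. Covers are far from unique and may be made arbitrarily large, so the crux is to pin down one bad subspace present in every cover; this rests on the essentiality of the minimal $1$-separably injective extension, a point that is genuinely delicate in the Banach (rather than purely algebraic) category and must be extracted from \cite{accgmLN}. The second hard ingredient is the independence phenomenon itself: showing that under {\sf MA}${}+\mathfrak c=\aleph_2$ the forced $1$-separable injectivity of a cover of $c_0$ is incompatible with a weak${}^*$-separable dual, whereas under {\sf CH} the density-$\aleph_1=\mathfrak c$ covers of a separable space may embed into $\ell_\infty$ --- the asymmetry that makes the special subspace $G$ necessary in part (1). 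It is here that the Brech--Koszmider machinery and the amalgamation analysis do the real work.
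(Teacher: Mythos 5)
Your plan for part (1) is built on a premise that cannot be realized. You want a subspace $G\subset\ell_\infty$ such that \emph{every} $1$-separably injective space containing $G$ contains an isometric copy of $c_0(\omega_1)$, the centres of your sets $F_\alpha$ being ``forced into the $c_0(\omega_1)$-pattern''. But $\ell_\infty$ itself is a $1$-separably injective (indeed $1$-injective) superspace of $G$, and $\ell_\infty$ contains no copy of $c_0(\omega_1)$ --- by precisely the countable-norming-set argument you invoke. So whatever sets $F_\alpha$ you arrange, their centres in $\ell_\infty$ exist and do \emph{not} span $c_0(\omega_1)$; and if you only force an uncountable $1$-separated family of centres, that is no obstruction at all, since $\ell_\infty$ contains $2$-separated sets of cardinality $2^{\mathfrak c}$. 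The ingredient your part (1) never uses is the \emph{extension property} of the cover, and that is where the actual proof lives: under {\sf CH} one produces (from the fact that no ultrapower of the Gurariy space is complemented in a $C(K)$-space) an operator from a subspace $K$ of $\ell_1(\mathfrak c)\subset\ell_\infty$ into the $1$-separably injective space $\mathcal G_\U$ that admits no extension to $\ell_\infty$; since any cover $\mathscr S(G)$ with $G=c_0+K$ must permit such an extension, $\mathscr S(G)$ cannot be isomorphic to $\ell_\infty$, and Lemma \ref{piero} (via the $c_0$ summand) excludes proper $1$-separably injective subspaces of $\ell_\infty$ as well.

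Part (2) rests on two claims you do not justify. First, that under {\sf MA} $+\:\mathfrak c=\aleph_2$ every $1$-separably injective cover of $c_0$ must contain $\ell_\infty/c_0$ (or $c_0(\omega_1)$): the Brech--Koszmider results you gesture at concern the (non)universality of $\ell_\infty/c_0$, not a structural dichotomy for $1$-separably injective spaces, and no argument is supplied. Second, the ``essentiality'' of a ``minimal cover'' --- that any cover receives an isometric embedding of a distinguished one fixing $c_0$ --- is not part of the definition of cover used here and is not established anywhere; covers are highly non-unique and no minimality is claimed for them. The actual argument again runs through extendability of operators rather than containment of a non-embeddable subspace: the Avil\'es--Koszmider space $\mathscr{AK}$ is $1$-separably injective and does not contain $\ell_\infty$, so by Rosenthal's theorem the inclusion $c_0\to\mathscr{AK}$ has no extension to $\ell_\infty$; the universal property of \emph{any} cover then gives $\mathscr S(c_0)\neq\ell_\infty$ at once, and Lemma \ref{piero} rules out $\mathscr S(c_0)$ being a subspace of $\ell_\infty$. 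Your weak$^*$-separability criterion is a correct tool for non-embeddability into $\ell_\infty$, but in both parts the operative obstruction is an operator that refuses to extend, not a bad subspace sitting inside every cover.
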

 \begin{proof} We prove first assertion (1). Consider any embedding $\mathcal G\to C[0,1]$ where $\mathcal G$ denotes Gurariy space and form the resulting exact sequence
$$\begin{CD}
0 @>>> \mathcal G @>>> C[0,1] @>>> Q @>>> 0
\end{CD}
$$
No ultrapower of this sequence splits since, by the results in \cite{accgmc,accgmLN}, no ultrapower of the Gurariy space is a complemented subspace of any $C(K)$-space. Write $Q_\U$ as a quotient of $\ell_1(\mathfrak c)$ with kernel $K$ and form the commutative diagram
\begin{equation}\label{gu}
\begin{CD}0@>>>K @>>>\ell_1(\mathfrak c) @>>> Q_\U@>>>0\\
&& @V{\tau}VV @VVV @|\\
0@>>>\mathcal G_\U@>>> C[0,1]_\U @>>> Q_\U@>>>0
\end{CD}
\end{equation}
in which the operator $\tau$ cannot be extended to an operator $\ell_1(\mathfrak c)\to \mathcal G_\U$. Since $\ell_1(\mathfrak c)$ is a subspace of $\ell_\infty$, $\tau$ cannot be extended to $\ell_\infty$. Now, observe that if a subspace $X$ of $\ell_\infty$ admits an embedding $i: X\to \ell_\infty$ such that operators from $X$ into a space $E$ can be extended to $\ell_\infty$ through $i$ then every embedding $j: X\to \ell_\infty$ enjoys the same property as $i$. Thus, a subspace $X$ of $\ell_\infty$ such that some $\mathscr S(X)$ is isomorphic to $\ell_\infty$ admits an embedding $X\to \ell_\infty$ with the property that every operator $X\to E$ from $X$ into any $1$-separably injective space can be extended to $\ell_\infty$. And consequently, the same occurs to any embedding $X\to \ell_\infty$. In conclusion, pick $X=K$ to get that  $\mathscr S(K)$ cannot be isomorphic to $\ell_\infty$.

Pick now $G=c_0 + K$. Any $1$-separably injective subspace of $\ell_\infty$ containing $G$ must therefore be $\ell_\infty$ by Lemma \ref{piero}. On the other hand, since $K$ is Schur, $c_0$ and $K$ are incomparable and thus $c_0 + K= c_0\oplus K$. There is therefore an operator $c_0 + K\to \mathcal G_\U$ that cannot be extended to $\ell_\infty$ and thus no $1$-separably injective cover of $G$ can be a subspace of $\ell_\infty$.\medskip

To prove (2) we need to consider the $1$-separably injective space $\mathscr{AK}$ that does not contain $\ell_\infty$ constructed by Avil\'es and Koszmider \cite{ak} under the axioms {\sf MA +} $\mathfrak c = \aleph_2$. Since $\mathscr{AK}$ is separably injective, it contains $c_0$ \cite{accgm1,accgmLN} but the inclusion $c_0\to \mathscr {AK}$ cannot be extended  to an operator $\ell_\infty\to \mathscr{AK}$: indeed, any such extension should be, by Rosenthal's theorem \cite{rose}, either a weakly compact operator, which is impossible, or an isomorphism on a copy of $\ell_\infty$, which is impossible too since $\mathscr{AK}$ does not contain $\ell_\infty$. Consequently, $\mathscr S(c_0) \neq \ell_\infty$. By
Lemma \ref{piero}, $\mathscr S(c_0)$ cannot be a subspace of $\ell_\infty$.\end{proof}

The difference between the two results is that the subspace $G$ of (1) cannot be separable since, under {\sf CH}, $\ell_\infty$ is a $1$-separable injective cover of every separable space. A related topic is the open problem of whether $1$-separably injective spaces with density character at most $\mathfrak c$ must be quotients of $\ell_\infty$. Observe that under
${\sf MA} + \mathfrak c = \aleph_2$ the space $\mathscr{AK}$ cannot be a quotient of $\ell_\infty$.

\section{The interplay between the Jung and Kottman constants:\\
 extension of Lipschitz maps}

If $X$ is an infinite-dimensional Banach space with unit ball $B(X)$, the Kottman constant \cite{Ko} of $X$ is defined as
$$K(X) =\sup_{(x_n)\in B(X)} \sep(x_n)$$
where, for a given sequence $(x_n)$, we define $ \sep((x_n)_n)=\inf_{n\neq m} \|x_n-x_m\|.$ A well known result of Elton and Odell \cite{EO} asserts that $K(X)>1$ for every infinite-dimensional Banach space $X$.
The finite Kottman constant is defined as
$$ K_f(X)=\; \sup \{r>0: \forall n \in \N \quad \exists\; A: |A|=n\; \mathrm{and}\; r-\mathrm{separated}\}$$

We define the Lipschitz expansion constants: $e(X, Z)$ (resp. $e_1(X, Z)$) is the infimum of all $\lambda>0$ such that for every subset $M$ of $X$ (resp. every subset $M$ and every point $x\in X$) every Lipschitz map $f:M\to Z$ admits a Lipschitz extension $F: X\to Z$ (resp. $F: M\cup{\{x\}}\to Z$) with $\Lip(F)\leq \lambda \Lip(f)$. It will be useful for us to define the constants $e_1^s(X, Z)$ (resp. $e_1^f(X, Z))$ as the versions of $e_1(X, Z)$ obtained allowing only separable (resp. finite) $M$. It is clear that $e_1^f \leq e_1^s \leq e_1$. One has

\begin{prop}\label{kottmanjung} For every couple $Y,X$ of infinite dimensional Banach spaces one has
\begin{enumerate}
\item\label{kje} $K(Y)J_s(X) \leq 2e_1^s(Y,X).$
\item\label{kjef}$K_f(Y)J_f(X) \leq 2e_1^f(Y,X).$
\item $K_f(Y)J_s(X) \leq 2e_1^s(Y,X^{**}).$
\end{enumerate}
The inequalities above are sharp and $J_s$ cannot be replaced by $J$; precisely, $K(Y)J(X)\leq 2e_1(Y, X)$ does not hold.
\end{prop}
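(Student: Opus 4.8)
The plan is to prove all three inequalities by a single device — manufacturing a Chebyshev center of a test set in $X$ (resp. $X^{**}$) out of a one-point Lipschitz extension — and then to settle sharpness and the failure of the $J$-version by explicit examples. For inequality \eqref{kje}, fix $\e>0$, a separable bounded set $A\subset X$ with $\delta(A)>0$ realizing $\tfrac{2r(A)}{\delta(A)}\ge J_s(X)-\e$, and a dense sequence $(a_n)$ in $A$. By definition of the Kottman constant choose $(y_n)\subset B(Y)$ with $\sep(y_n)\ge K(Y)-\e$; discarding at most one term we may assume $0\notin\{y_n\}$. Define $f\colon\{y_n\}\to X$ by $f(y_n)=a_n$; then $\Lip(f)\le \delta(A)/(K(Y)-\e)$, since $\|a_n-a_m\|\le\delta(A)$ while $\|y_n-y_m\|\ge K(Y)-\e$. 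The domain $\{y_n\}$ is separable, so by definition of $e_1^s(Y,X)$ the map $f$ extends to $F\colon\{y_n\}\cup\{0\}\to X$ with $\Lip(F)\le (e_1^s(Y,X)+\e)\Lip(f)$. Put $b=F(0)\in X$. Since $\|y_n\|\le 1$ we get $\|a_n-b\|=\|F(y_n)-F(0)\|\le\Lip(F)$ for every $n$, hence $\sup_{a\in A}\|a-b\|\le\Lip(F)$ by density, so $r(A)\le\Lip(F)\le (e_1^s(Y,X)+\e)\,\delta(A)/(K(Y)-\e)$. Rearranging and letting $\e\to0$ yields $K(Y)J_s(X)\le 2e_1^s(Y,X)$.

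Inequality \eqref{kjef} is the verbatim finite analogue: take $A$ finite realizing $J_f(X)$, and for $r<K_f(Y)$ use the definition of $K_f$ to pick a finite $r$-separated subset of $B(Y)$ of the same cardinality as $A$ (taking one extra point lets us avoid $0$). The one-point extension now only needs a finite domain, so $e_1^f$ appears; the center $b=F(0)\in X$ again bounds $r(A)=\max_i\|a_i-b\|$, and letting $r\uparrow K_f(Y)$ gives $K_f(Y)J_f(X)\le 2e_1^f(Y,X)$.

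Inequality (3) is where the finiteness of the Kottman data must be reconciled with the separability of the Jung test set, and this is the step I expect to be the crux. For each $n$ I run the finite construction of \eqref{kjef} using an $n$-point $(K_f(Y)-\e)$-separated set in $B(Y)$ mapped to $a_1,\dots,a_n$ and extended into $X^{**}$; as the domain is finite the extension is governed by $e_1^f(Y,X^{**})\le e_1^s(Y,X^{**})$, producing centers $b_n\in X^{**}$ with $\|a_i-b_n\|\le (e_1^s(Y,X^{**})+\e)\,\delta(A)/(K_f(Y)-\e)$ for all $i\le n$. The sequence $(b_n)$ is norm-bounded, so it has a weak${}^{*}$ cluster point $b\in X^{**}$; since the norm of $X^{**}$ is weak${}^{*}$ lower semicontinuous and each $a_i\in X\subset X^{**}$ is fixed, $\|a_i-b\|\le\liminf_n\|a_i-b_n\|$ for every $i$, whence $\sup_{a\in A}\|a-b\|\le (e_1^s(Y,X^{**})+\e)\,\delta(A)/(K_f(Y)-\e)$, and letting $\e\to0$ gives $K_f(Y)J_s(X)\le 2e_1^s(Y,X^{**})$. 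The delicate point — and the main obstacle — is exactly that the center so produced lives in $X^{**}$ rather than in $X$: one must check that the radius controlled on the left is the one relevant for $J_s$, which is precisely why the target is enlarged to $X^{**}$ and why $K_f$, not $K$, suffices — the weak${}^{*}$ compactness of balls of $X^{**}$ is what converts finitely separated configurations into a single genuine center.

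Finally, sharpness and the failure of the $J$-version. For sharpness I would evaluate the inequalities at explicit pairs where every constant is known; the target $X=c_0$ is the natural candidate since $J_s(c_0)=2$, in which case \eqref{kje} reduces to the improved Kalton inequality $K(Y)\le e_1^s(Y,c_0)$, whose sharpness is inherited from the classical $K(Y,c_0)\le e(Y,c_0)$. For the failure of $K(Y)J(X)\le 2e_1(Y,X)$ I would take as $X$ one of the $1$-separably injective but not $1$-injective spaces produced earlier in the paper, so that $J_s(X)=1$ while $J(X)=2$, and as $Y$ a space with $K(Y)$ close to $2$; the non-separable bounded set witnessing $J(X)=2$ cannot be detected by any countable separated sequence in $Y$, which is exactly why $e_1(Y,X)$ stays below $K(Y)J(X)/2=K(Y)$ and the naive $J$-inequality breaks, whereas the $J_s$-inequality \eqref{kje} survives. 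Making the estimate on $e_1(Y,X)$ explicit for a concrete such pair is the remaining computational task.
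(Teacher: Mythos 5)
Your arguments for parts (1) and (2) are correct and essentially the paper's own: a separated set in $B(Y)$ is mapped bijectively onto (a dense sequence in) a near-extremal Jung set $A\subset X$, and the one-point Lipschitz extension at $0$ manufactures a near-center. Your version is in fact slightly leaner: since only the lower separation bound is needed to get $\Lip(f)\le\delta(A)/(K(Y)-\varepsilon)$, you can dispense with the two-sided distance control (the lemma of Dom\'inguez Benavides in case (1), the finite Ramsey lemma in case (2)) that the paper uses after normalizing $\delta(A)=K(Y)-\varepsilon$ to make the bijection $1$-Lipschitz.

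Part (3) is where your proof genuinely diverges from the paper, and the gap you flagged is real: enlarging the target to $X^{**}$ does not repair it. Your weak$^*$ cluster point $b$ lies in $X^{**}$, so your estimate bounds $\inf_{b\in X^{**}}\sup_{a\in A}\|a-b\|$, whereas $J_s(X)$ is defined through $r(A)=\inf_{b\in X}\sup_{a\in A}\|a-b\|$, and for $A\subset X$ the bidual-relative radius can be strictly smaller: in $c_0$ the set of partial sums $\{\sum_{i\le n}e_i : n\in\N\}$ has radius $1$ in $c_0$ but radius $1/2$ in $\ell_\infty$. So what you prove is the inequality with bidual centers, not (3) as written. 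The paper takes a different route, via ultrapowers: $K_f(Y)=K(Y_{\mathscr U})$ turns (1) into $\tfrac12 K_f(Y)J_s(X)\le e_1^s(Y_{\mathscr U},X)$ --- here the center does lie in $X$ --- and then transfers the extension constant through the Claim $e_1^s(Y_{\mathscr U},X)\le e_1^s(Y,X^{**})$, proved by extending on representatives and composing with a norm-one map $X_{\mathscr U}\to X^{**}$. But note that this transfer has the mirror-image defect: the composed extension lands in $X^{**}$, not in $X$, so what is actually established is again a mixed statement. Indeed, taken literally (3) cannot hold: for $X=c_0$ one has $J_s(c_0)=2$ and $e_1^s(Y,\ell_\infty)=1$ (coordinatewise McShane extension), so (3) would force $K_f(Y)\le 1$, contradicting Elton--Odell. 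Thus your obstruction is unavoidable; the correct reading of (3) --- and what both your finite-plus-weak$^*$-compactness argument and the paper's ultrapower argument actually deliver --- is the version in which the center (equivalently, the extension) is allowed in the bidual. Your argument has the merit of avoiding ultrafilters altogether.

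For the final assertions you leave exactly the decisive step open: ``making the estimate on $e_1(Y,X)$ explicit'' is the whole content. The paper closes it with the pair $Y=c_0$, $X=\ell_\infty/c_0$: then $K(c_0)=J(\ell_\infty/c_0)=2$ while $e_1(c_0,\ell_\infty/c_0)=e_1^s(c_0,\ell_\infty/c_0)=1$ --- not for your heuristic reason (non-detectability of nonseparable sets by countable configurations), but simply because every subset of the separable space $c_0$ is separable and $\ell_\infty/c_0$ is $1$-separably injective, whence one-point extensions exist with constant $1$. This single pair does double duty: it gives equality in (1), witnessing sharpness (your alternative route through $X=c_0$ and Kalton's bound would also work, but needs the upper estimate $e(Y,c_0)\le 2$ for separable $Y$, which you do not state), and it violates $K(Y)J(X)\le 2e_1(Y,X)$ since $2\cdot 2>2\cdot 1$. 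Your first suggested counterexample ($X$ $1$-separably injective but not $1$-injective, $K(Y)=2$) matches the paper's first argument, but it too needs $e_1^s(Y,X)=1$, i.e.\ the same separable-injectivity input, made explicit.
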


\begin{proof} Let $\varepsilon>0$. Consider in $X$ a bounded countable set $A$ such that
$\frac{2 \, r(A)}{\delta(A)}> J_s(X) - \varepsilon$ and $\delta(A)=K(X)-\varepsilon$. Choose in the unit ball of $Y$ an infinite, countable set $C= \{y_1, . . . , y_n, . . . \}$ such that
$K(Y)-\varepsilon \leq||y_i - y_j|| \leq K(Y)+\varepsilon$ for every pair $ i, j \in N;\;   i \neq j$ (see \cite [3.4 Lemma]{DB}).  Consider any bijective map $ \tau: C \rightarrow A$. This map is $1$-Lipschitz since
$$\|\tau y_i-\tau y_j\|\leq \delta (A) \leq \|y_i-y_j\|. $$

Let $T: C \cup  \{0\} \to X$ be a Lipschitz extension of $\tau$ with Lipschitz constant not greater than
$e_1^s(Y, X)+\varepsilon$. It turns out that

\begin{eqnarray*} \frac{1}{2} (J_s(X) - \varepsilon) (K(Y)-\varepsilon) &=&  \frac{1}{2} (J_s(X) - \varepsilon) \delta(A)\\  &<& r(A)\\ &\leq&  \sup \{||T(0)- \tau y_i||: i \in N \} \leq e_1^s(Y, X)+\varepsilon.\end{eqnarray*}
Letting $\varepsilon \to 0$ proves part (1) of the lemma.

 To obtain the finite version (2) we need a result which is likely to be known, but for which we could only find an abstract formulation in \cite[Cor. 1.4]{kania} , so we include a simple proof here for the sake of completeness.

\begin{lemma} Let $X$ be a Banach space and let $\varepsilon>0$. For every  $n\in \N$ it is possible to choose in the unit ball of $X$ a set $\{x_1, . . . , x_n\}$ such that
$K_f(X)-\varepsilon \leq||x_i-x_j|| \leq K_f(X)+\varepsilon$ for every pair $ i, j = 1, . . . ,  n ;\;   i \neq  j$.
\end{lemma}
\begin{proof} For some $n_1 \in \N$ one cannot find $n_1$ points in $B(X)$ which are $                                                                (K_f(X)+\varepsilon)$-separated. According to the finite Ramsey theorem, there is a number $R(n,n_1)$  such that whenever one has $n_2 \geq R(n_1)$ points which are $(K_f(X)-\varepsilon)$-separated, there must be either $n$ points whose mutual distance is between $K_f(X)-\varepsilon$ and $K_f(X) + \varepsilon$, or $n_1$ points $K_f(X)+\varepsilon$-separated, which is impossible. So only the first case holds, which proves the assertion.\end{proof}

The rest of the argument is as before.

Regarding the proof of (3), it was observed in \cite{castpapi} that $K_f(Y)=K(Y_{\mathscr U})$ for every countably incomplete ultrafilter $\mathscr U$. Therefore, inequality (1) becomes

$$\frac{1}{2}K_f(Y)J_s(X) \leq e_1^s(Y_{\mathscr U}, X). $$

One has:\medskip

\noindent \textbf{Claim.} $e_1^s(Y_{\mathscr U}, X) \leq  e_1^s(Y, X^{**})$.\medskip

\noindent \emph{Proof of the Claim.} Let $M\subset Y_\U$
be a countable subset, $x\notin M$ and $f: M\to X$ a Lipschitz map. Assume that $M = \{[m^k_n] : k\in \N\}$ and $x=[x_n]$.  Form the countable subsets $M_k = \{m^k_n: n\in \N\}\subset Y$ and consider the restriction $f_k$ of $f$ to the set $M_k$ inside the natural (diagonal) copy of $Y$ inside $Y_\U$. Let $F_k$ be an extension to $M_k \cup \{x_k\}$ with Lipschitz constant $e_1^s(Y, X) + 1/k$. Form the element
$[F_k]: M\cup \{x\} \to X_\U$ given by $[F_k][z_n] = [F_k(z_k)]$, which is a Lipschitz map with Lipschitz constant $e_1^s(Y, X)$. Pick a norm one projection $X_\U\to X^{**}$ to get a Lipschitz map $P[F_k]: M\cup\{x\}\to X^{**}$ with Lipschitz constant $e_1^s(Y, X)$.\hfill $\square$\medskip

One thus gets $\frac{1}{2}K_f(Y)J_s(X) \leq e_1^s(Y, X^{**})$.\medskip

None of the second group of inequalities can hold: pick as $X$ any $1$-separably injective space that is not $1$-injective and  $Y$ separable such that $K(Y)=2$. The inequality $K(Y)J(X)\leq e_1^s(Y, X)$ forces $J(X)=1$ which cannot be. The inequality $K(Y)J(X)\leq e_1(Y,X)$ also fails: pick $Y=c_0$ and $X=\ell_\infty/c_0$; then $K(c_0) = J(\ell_\infty/c_0) =2$  and $e_1(c_0, \ell_\infty/c_0) = e_1^s(c_0, \ell_\infty/c_0) =1$.
\end{proof}

\section{Applications.}

\subsection{} Kalton proved in \cite[Proposition 5.8]{Kal} the unexpected result that
$$K(X)\leq e(X,c_0)$$
for every infinite dimensional Banach space $X$. See \cite{jesus} for an  extension to $\alpha$-H\"older maps. We derive now from Proposition \ref{kottmanjung} a simple proof for an improved version.
\begin{prop} For every infinite dimensional Banach space $X$ one has .
$$K(X)\leq e_1^s(X,c_0)$$
\end{prop}
\begin{proof} The result follows from $J_s(c_0)=2$ and $K(X)J_s(c_0) \leq 2e_1^s(X,c_0)$.
\end{proof}
 \subsection{} Kalton shows in \cite{Kal} that if $e_1(Y, X)=\lambda$ then norm one operators from subspaces of $Y$ to $X$ admit, for each $\varepsilon>0$, an  extension to  one more dimension with norm at most $\lambda+\varepsilon$. The argument is simple: pick $A$ a subspace of $Y$ and $b\notin A$. If a norm one operator $\tau: A\to X$ extends to a Lipschitz map $L: A \cup {\{b\}}\to X$ with Lipschitz constant $\lambda$, set $T: A + [b]\to X$ the operator
$T(b)=L(b)$. Then for every $a\in A$ one has $\|T(a-b)\|=\|La - Lb\|\leq \lambda \|a-b\|$, from where $\|T\|\leq \lambda$. One thus has:

\begin{lemma}\label{klemma} A Banach space $X$ such that $e_1(Y, X)=1$ for every Banach space $Y$ must be $1$-injective.\end{lemma}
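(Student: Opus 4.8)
The plan is to show that the hypothesis forces the full (non-separable) Jung constant $J(X)=1$, after which the Davis--Lindenstrauss characterization recalled in Section~1 (namely $J(X)=1$ iff $X$ is $1$-injective) finishes the argument. The computation is the non-separable analogue of the first part of Proposition~\ref{kottmanjung}: there the factor $K(Y)$ came from a countable almost-$2$-separated set and produced only $J_s$, whereas here the freedom to choose $Y$ among \emph{all} Banach spaces lets us manufacture exactly-$2$-separated families of any cardinality and so recover the whole of $J(X)$.

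Fix a bounded set $A\subset X$ with $\delta(A)>0$ and put $\kappa=|A|$. Choose a set $\Gamma$ with $2^{|\Gamma|}\geq\kappa$ (e.g.\ $|\Gamma|=\kappa$) and let $C\subset\ell_\infty(\Gamma)$ be a family of $\kappa$ pairwise distinct functions $\Gamma\to\{-1,1\}$. Any two distinct such functions disagree at some coordinate, so $C$ lies on the unit sphere and is \emph{exactly} $2$-separated. Pick a bijection $\tau:C\to A$; since $\|\tau c-\tau c'\|\leq\delta(A)=\tfrac{\delta(A)}{2}\|c-c'\|$ for $c\neq c'$, we have $\Lip(\tau)\leq\delta(A)/2$. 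Because $e_1(\ell_\infty(\Gamma),X)=1$, for each $\e>0$ the map $\tau$ extends (using $0\notin C$, as every element of $C$ has norm $1$) to a Lipschitz map $T:C\cup\{0\}\to X$ with $\Lip(T)\leq(1+\e)\Lip(\tau)\leq(1+\e)\,\delta(A)/2$.

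The point $p=T(0)\in X$ is then an almost-center of $A$: for every $c\in C$ one has $\|p-\tau c\|=\|T(0)-T(c)\|\leq\Lip(T)\,\|c\|=\Lip(T)$, whence $r(A)\leq\sup_{a\in A}\|p-a\|\leq\Lip(T)\leq(1+\e)\,\delta(A)/2$. Letting $\e\to0$ gives $r(A)\leq\delta(A)/2$, which together with the trivial bound $r(A)\geq\delta(A)/2$ yields $2r(A)/\delta(A)=1$. As $A$ was arbitrary, $J(X)=1$, and hence $X$ is $1$-injective.

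The only genuine point is the construction in the middle paragraph: one must produce, for \emph{every} cardinal $\kappa$, a single Banach space carrying a (nearly) $2$-separated family of size $\kappa$ in its unit ball, and this is precisely where the full hypothesis ``$e_1(Y,X)=1$ for every $Y$'' is needed rather than its separable counterpart. The sign vectors of $\ell_\infty(\Gamma)$ supply such families, but only through the non-separable extension constant $e_1$; had we assumed merely $e_1^s(Y,X)=1$, the same computation would deliver $J_s(X)=1$ and thus only $1$-separable injectivity, in agreement with Proposition~\ref{kottmanjung}.
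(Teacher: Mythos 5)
Your proof is correct, but it follows a genuinely different route from the paper's. The paper proves Lemma~\ref{klemma} by \emph{linearizing} the one-point Lipschitz extension: given a norm-one operator $\tau$ on a subspace $A\subset Y$ and $b\notin A$, a Lipschitz extension $L$ to $A\cup\{b\}$ yields a linear extension $T$ to $A+[b]$ with $\|T\|\leq\Lip(L)$, so $e_1(Y,X)=1$ gives $(1+\e)$-extensions to one-codimensional superspaces; $1$-injectivity then follows from the ball-intersection machinery (Nachbin plus the Lindenstrauss compactness lemma) recalled in the proof of Theorem~\ref{jung}. You instead stay entirely on the metric side: you run the argument of Proposition~\ref{kottmanjung}(1) with the countable almost-$K(Y)$-separated set replaced by the exactly $2$-separated family of sign vectors on the unit sphere of $\ell_\infty(\Gamma)$, which exists in every cardinality; the one-point extension at $0$ then produces an almost-center for an arbitrary bounded $A\subset X$, giving $J(X)=1$, and you conclude by the Davis--Franchetti characterization ($J(X)=1$ iff $X$ is $1$-injective) recalled in Section~1. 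All steps check out: the sign vectors are exactly $2$-separated and of norm one, so $\Lip(\tau)\leq\delta(A)/2$, the extension point $0$ lies outside $C$, and the resulting bound $r(A)\leq(1+\e)\delta(A)/2$ is what is needed. Each approach buys something: the paper's is Kalton's original observation and connects $e_1$ directly to linear extension, which is the theme of Section~4; yours makes explicit why the full (nonseparable) hypothesis $e_1(Y,X)=1$ for \emph{all} $Y$ is what upgrades $J_s$ to $J$, and it exhibits Lemma~\ref{klemma} as the exact, arbitrary-cardinality limit of Proposition~\ref{kottmanjung}(1). Both rest on a classical black box stated in Section~1, so neither is more elementary than the other.
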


Let us show how this result can be completed. Recall that the argumentation above cannot be used for $1$-separable injectivity. One has:

\begin{teor} $\;$
\begin{itemize}
\item A Banach space $X$ is $1$-injective if and only if $e_1(Y, X)=1$ for all Banach spaces $Y$.
\item A Banach space $X$ is $1$-separably injective if and only if $e_1^s(Y, X)=1$ for all Banach spaces $Y$.
\end{itemize}
\end{teor}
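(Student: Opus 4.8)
The plan is to prove the four implications separately, grouping them by type. The two ``only if'' statements (injectivity forces the extension constant to be $1$) follow from the classical ball-intersection characterizations of injectivity, whereas the two ``if'' statements (extension constant $1$ forces injectivity) are genuinely asymmetric and demand different tools.

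For the ``only if'' directions I would argue as follows. Fix a Banach space $Y$, a subset $M\subset Y$, a point $x\in Y$ and a Lipschitz map $f:M\to X$; after rescaling assume $\Lip(f)=1$. Extending $f$ to $M\cup\{x\}$ with Lipschitz constant $1$ amounts to producing a point $p$ with $\|p-f(m)\|\le\|x-m\|$ for every $m\in M$, that is, a point in $\bigcap_{m\in M}B(f(m),\|x-m\|)$. These balls are mutually intersecting, since for $m,m'\in M$ one has $\|f(m)-f(m')\|\le\|m-m'\|\le\|m-x\|+\|x-m'\|$, which is exactly the sum of the two radii. If $X$ is $1$-injective, Nachbin's theorem supplies a common point, so $e_1(Y,X)\le1$; as $e_1\ge1$ trivially (any extension restricts to $f$), $e_1(Y,X)=1$. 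If $X$ is only $1$-separably injective and $M$ is separable, I would choose a countable dense subset $\{m_n\}\subset M$, apply the Lindenstrauss characterization (every countable family of mutually intersecting balls has nonempty intersection) to $\{B(f(m_n),\|x-m_n\|)\}_n$ to get $p$, and then recover $\|p-f(m)\|\le\|x-m\|$ for all $m\in M$ by approximating $m$ with the $m_n$ and using the continuity of $f$ and of the norm. This gives $e_1^s(Y,X)=1$.

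The ``if'' directions must be handled by different means. That $e_1(Y,X)=1$ for all $Y$ implies $1$-injectivity is precisely Lemma \ref{klemma}, so nothing further is required there. For separable injectivity I would instead invoke Proposition \ref{kottmanjung}(1), namely $K(Y)J_s(X)\le 2e_1^s(Y,X)$, specialized to $Y=c_0$. Since $K(c_0)=2$ (witnessed e.g. by the unit vectors $x_n=e_n-\sum_{k<n}e_k$, which are pairwise at distance $2$ in $c_0$) and $e_1^s(c_0,X)=1$ by hypothesis, the inequality reads $2J_s(X)\le2$, whence $J_s(X)\le1$ and therefore $J_s(X)=1$ (as $J_s\ge1$ always). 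Theorem \ref{jung} then yields that $X$ is $1$-separably injective.

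The main obstacle is conceptual rather than computational: the separable argument cannot be transplanted to the $1$-injective case. Proposition \ref{kottmanjung} furnishes an inequality involving $J_s$, not $J$, and indeed the analogue $K(Y)J(X)\le2e_1(Y,X)$ fails, as recorded after Proposition \ref{kottmanjung}; so passing through the Jung constant would at best recover $J_s(X)=1$, i.e. separable injectivity, never full $1$-injectivity. This is exactly why the $1$-injective ``if'' direction is forced to rely on the linear one-dimensional extension trick of Lemma \ref{klemma} rather than on the Kottman--Jung inequality, and recognizing this dichotomy is the key structural point of the proof.
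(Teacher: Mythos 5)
Your proof is correct, and its two ``if'' halves coincide with the paper's (Lemma~\ref{klemma} for full injectivity; the inequality of Proposition~\ref{kottmanjung} with $Y=c_0$, $K(c_0)=2$, followed by Theorem~\ref{jung} for the separable case --- note you even handle the factor $2$ more carefully than the paper's own write-up, which drops it harmlessly). Where you genuinely diverge is in the ``only if'' halves. The paper proves these through the Lipschitz-free space: it shows that equal-norm linear extension to one-codimensional superspaces yields equal-constant Lipschitz extension to one extra point, by extending the linearization $\phi_L:\mathcal F(M)\to X$ to $\mathcal F(M\cup p)$; the separable case then follows because $\mathcal F(M\cup p)$ is separable when $M$ is, so only the \emph{definition} of $1$-(separable) injectivity is invoked. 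You instead translate the one-point extension problem directly into the nonemptiness of $\bigcap_{m\in M}B\bigl(f(m),\|x-m\|\bigr)$, observe these balls are mutually intersecting, and invoke the Nachbin and Lindenstrauss ball-intersection characterizations, with a routine density-plus-continuity step to pass from a countable dense subset of $M$ to all of $M$. Your route is more elementary in that it avoids the free-space machinery entirely and ties the theorem back to the ball-intersection theme that runs through the rest of the paper; the price is that it leans on the (countable) ball-intersection characterizations as black boxes, whereas the paper's free-space argument works straight from the extension-of-operators definition and isolates a reusable structural fact (one-codimensional linear extension implies one-point Lipschitz extension). Both arguments are complete.
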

\begin{proof} To prove the first assertion we just need to obtain the converse of Kalton's lemma \ref{klemma}, and this is consequence of

\begin{lemma} If all operators $E\to X$ extend with the same norm to operators $Y\to X$ whenever $\dim Y/E=1$ then Lipschitz maps $M\to X$ extend to Lipschitz maps $M\cup \{p\}\to X$ with the same Lipschitz constant.\end{lemma}
\begin{proof} Let $\textbf{Lip}(M, \R)_0$ be the space of Lipschitz maps $\ell: M\to \R$ such that $\ell(m_0)=0$ for a fixed $m_0\in M$. It is a Banach space endowed with the natural norm $\|\ell\|= Lip(\ell)$ where $Lip$ denotes the Lipschitz constant of $\ell$. Let $\Delta: M \to \textbf{Lip}(M, \R)_0^*$ be the canonical Lipschitz map given by $\Delta_m(\ell) = \ell(m)$. The Lipschitz-free space \cite{godeka} $\mathcal F(M)$ is the closure of the image $\delta(M)$. It has the property that for every Lipschitz map $L: M\to X$ such that $L(m_0)=0$ there is an operator $\phi_L: \mathcal F(M)\to X$ such that $\phi_L\Delta = L$ and $\|\phi_L\|= Lip(L)$.

Now, let $L:M\to X$ be a Lipschitz map. With a translation there is no loss of generality assuming that $L(m_0)=0$. Since
 $\mathcal F(M)$ is a one-codimensional subspace of $\mathcal F(M\cup p)$, the operator $\phi_L: \mathcal F(M) \to X$ extends with the same norm to an operator $\widehat{\phi_L}: \mathcal F(M\cup p)\to X$, and thus $\widehat{\phi_L}\Delta$ is a Lipschitz map extending $L$ with the same Lipschitz constant.\end{proof}

 To prove the second equivalence we use the inequality $K(Y)J_s(X)\leq e_1^s(Y,X)$ to get that a Banach space $X$ such that $e_1^s(Y,X)=1$ for some Banach space $Y$ with $K(Y)=2$ must have $J_s(X)=1$ and thus must be $1$-separably injective. Conversely, if a Banach space is $1$-separably injective then $e_1(Y, X)=1$ for every separable Banach space $Y$. Now, if $e_1(Y, X)=1$ for all separable Banach spaces $Y$ then $e_1^s(Y, X)=1$ for all Banach spaces $Y$.\end{proof}

\end{document}